\documentclass[12pt]{amsart}
\counterwithin{equation}{section}

\setlength{\baselineskip}{15pt}
\setlength{\topmargin}{-1cm} 
\setlength{\oddsidemargin}{0.2cm}
\setlength{\evensidemargin}{0.2cm}
\setlength{\textwidth}{15.8cm}
\setlength{\textheight}{22.42cm}

\allowdisplaybreaks

\usepackage{bbm}
\usepackage{amssymb}
\usepackage{amsmath}
\usepackage[all]{xy}
\usepackage{tikz}
\usepackage{amsfonts}
\usepackage{mathrsfs}
\usepackage{latexsym}
\usepackage{graphicx}
\usepackage{enumerate}
\usepackage{amscd,amssymb,amsmath,amsbsy,amsthm}
	\definecolor{linkred}{rgb}{0.7,0.2,0.2}
	\definecolor{linkblue}{rgb}{0,0.2,0.6}
	\definecolor{linkgreen}{rgb}{0,0.6,0.2}
\usepackage[colorlinks,plainpages,
    linkcolor=linkblue,
    citecolor=linkgreen,
    urlcolor=linkred]{hyperref}
\usepackage[capitalise]{cleveref}
\crefname{equation}{}{}
\usepackage{stmaryrd}

\newtheorem{theorem}{Theorem}[section]
\newtheorem*{theorem*}{Theorem}
\newtheorem{definition}[theorem]{Definition}
\newtheorem{lemma}[theorem]{Lemma}
\newtheorem{corol}[theorem]{Corollary}
\newtheorem{prop}[theorem]{Proposition}

\newtheorem{example}[theorem]{Example}
\newtheorem{remark}[theorem]{Remark}

\def\calK{{\mathcal{K}}}
\def\calO{{\mathcal{O}}}
\def\calR{{\mathcal{R}}}
\def\bfN{{\mathbf{N}}}
\def\Gr{\operatorname{Gr}}
\def\bGr{\overline{\operatorname{Gr}}}
\def\pt{\mathsf{pt}}
\def\gr{\operatorname{gr}}
\def\Eu{\operatorname{Eu}}
\def\Diff{\operatorname{\mathsf{Diff}}}
\def\Hom{\operatorname{Hom}}
\def\GL{\operatorname{GL}}

\tikzset{anchorbase/.style={>=To,baseline={([yshift=-0.5ex]current bounding box.center)}}}

\begin{document}

\title[Shifted twisted Yangians and Coulomb branch]{Quivers with Involutions and Shifted Twisted Yangians via Coulomb Branches II}

\author{Zichang Wang}
\address{Qiuzhen College, Tsinghua University, Beijing, China, 100084}
\email{w-zc21@mails.tsinghua.edu.cn}

\begin{abstract}
    To a quiver with involution, we show that there is an algebra homomorphism from the corresponding shifted twisted Yangian to the quantized Coulomb branch algebra of the 3d $\mathcal{N} = 4$ involution-fixed part of the quiver gauge theory in the second symmetric power case.
\end{abstract}

\maketitle

\setcounter{tocdepth}{1}
\tableofcontents

\section{Introduction}
Let $G$ be a complex reductive group and $N$ be a complex representation of $G$. To the pair $(G, N\oplus N^*)$, we can define the associated Higgs branch by the Hamiltonian reduction procedure. In a seminal work of Nakajima and Braverman--Finkelberg--Nakajima \cite{Na16,BFN18}, they gave a mathematically rigorous definition of the corresponding Coulomb branch using the affine Grassmannian. One of the advantages of their definition is that the Coulomb branch algebra naturally comes with a quantization by considering the loop rotation. Since then, the Coulomb branch has found a lot of applications in other fields of mathematics and physics; for example, see the following survey papers \cite{BF19,FinICM,Kam22}. 

Among the many examples, the quiver gauge theory is of particular interest. Let $Q=(Q_0,Q_1)$ be a quiver, where $Q_0$ is the set of vertices while $Q_1$ is the set of edges. For each edge $h\in Q_1$, let $s(h)$ (resp. $t(h)$) denote the source (resp. target) of the edge. Let $V$ and $W$ be two $Q_0$-graded vector spaces, and consider
\[
    G_V:=\prod_{i\in Q_0}\GL(V_i),\quad E_V:=\bigoplus_{h\in Q_1}\Hom(V_{s(h)}, V_{t(h)}),\textit{ and } L_{W,V}:=\bigoplus_{i\in Q_0}\Hom(W_i,V_i).
\]
Then the data 
\[
    (G,N):=(G_V,E_V\oplus L_{W,V})
\]
defines the so-called quiver gauge theory, whose Higgs branch is the Nakajima quiver variety \cite{Nak98}, while the Coulomb branch is the generalized affine Grassmannian slices \cite{BFN19}. Quiver varieties played an important role in the geometric study of quantum affine algebras and Yangians; see \cite{NakICM,Nak01,Var00}. On the other hand, the quantized Coulomb branch algebra of the quiver gauge theory can be used to give geometric realizations of the shifted quantum affine algebras and the shifted Yangians; see \cite{KWWY14,BFN19,We19,FT19}.

In this paper, we study quivers with involution $(Q,\tau)$, where $Q$ is simply-laced quiver without self-loops and $\tau$ is an involution of $Q$ satisfying
\begin{itemize}
    \item 
    $s(\tau(h))=\tau(t(h))$ and $ t(\tau(h))=\tau(s(h))$; 
    \item $\tau(s(h))=t(h)$ if and only if $\tau(h)=h$;
    \item $\tau(i)\neq i$ for any $i\in Q_0$.
\end{itemize}
This is considered by Enomoto--Kashiwara and Varagnolo--Vasserot to generalize a result of Ariki about affine Hecke algebra of type A; see \cite{Ari,EK07,EK08,EK082,VV11}. On the other hand, Satake diagrams, which are used to classify the symmetric pairs, are examples of quivers with involutions. Here is an example of type AIII$_6^{(\tau)}$:
\[
    \begin{tikzpicture}[scale=1, 
                        arrow/.style={red, <->, thick}]
        \foreach \i in {0,...,5}
            \node[circle, draw, inner sep=3pt] (a\i) at (\i,0) {};
            \foreach \i in {0,...,5}
            \node (b\i) at (\i,.05) {};
            \foreach \i in {1,...,6}
            \node  at (\i-1,-.5) {$\scriptstyle \i$};
        \draw[arrow] (b0) to[bend left=50] (b5);
        \draw[arrow] (b1) to[bend left=40] (b4);
        \draw[arrow] (b2) to[bend left=30] (b3);
        \draw[->,ultra thick] (a0) -- (a1); 
        \draw[->,ultra thick] (a2) -- (a1); 
        \draw[->,ultra thick] (a2) -- (a3); 
        \draw[->,ultra thick] (a4) -- (a3); 
        \draw[->,ultra thick] (a4) -- (a5); 
    \end{tikzpicture},
\]
where the red arrow denotes the involution. Quantizations of the symmetric pairs give the iquantum groups/twisted Yangians, which are coideal subalgebras of the usual quantum groups/Yangians; see \cite{Wan23} for a survey. Utilizing the Lu--Wang--Zhang's Drinfeld new presentation for the affine iquantum group of type AIII$_{2n-1}^{(\tau)}$ \cite{LWZ24}, Su and Wang gave a geometric realization of it via the equivariant K-theory of the Steinberg variety for the cotangent bundle of partial flag varieties in type $C$ \cite{SuW24}, which is an example of the $\sigma$-quiver variety introduced in \cite{Li19}. On the other hand, Nakajima \cite{nakajima2025instantons} was able to compute the K-matrices for some special $\sigma$-quiver varieties \cite{Li19} and got representations of the twisted Yangians on the equivariant cohomology of the $\sigma$-quiver varieties. His approach completely avoids the Drinfeld new presentation and instead relies on the Maulik--Okounkov stable envelopes \cite{MO19}. These works can be regarded as geometric realizations of the quantum symmetric pairs from the Higgs branch side.

The study of the corresponding Coulomb branch associated to $(Q,\tau)$ was initiated by Lu--Wang--Weekes \cite{LWW,LWW2}, see also \cite{bartlett2025gklo,nakajima2025instantons,SSX25} for related works. This paper is a continuation of \cite{SSX25}. Assume $\bigoplus_{i\in Q_0}V_i$ is equipped with a nondegenerate symmetric bilinear form $\langle-,-\rangle$ such that the orthogonal complement $V_i^\perp=\bigoplus_{j\neq \tau i} V_j$. Hence, we can identify $V_i=V_{\tau i}^*$. Let
\begin{align*}
    G_V^{\tau} &:=\left\{g\in G_V\mid g_i^t = g_{\tau i}^{-1}, \forall i\in Q_0\right\}\subseteq G_V,\\
    E_V^{\tau,-}   &:=\left\{f\in E_V\mid f_{\tau h}=-f_h^t, \forall h\in Q_1 \right\}\subseteq E_V.
\end{align*}
In particular, if an edge $h\in Q_1$ is fixed by $\tau$, then 
\[
    f_h\in \wedge^2 (V_{t(h)}),
\]
the second wedge power of $V_{t(h)}$. Shen--Su--Xiong considered the quantized Coulomb branch associated to the following data 
\[
    G=G_V^{\tau},\qquad \bfN = E_V^{\tau,-} \oplus L_{W,V},\qquad F=G_W,
\]
where $G_W:=\prod_{i\in Q_0}\GL(W_i)$ is the flavor symmetry group, and established an algebra homomorphism from the shifted twisted Yangians $\mathbf{Y}^{\tau}_{\mu}(\mathfrak{g})$ to the quantized Coulomb branch algebra. 

The above case was also proposed by Nakajima \cite[Section 3(ix)(C)]{nakajima2025instantons}. Moreover, he also proposed the second symmetric power case. I.e., let 
\[
    E_V^{\tau}:=\left\{f\in E_V\mid f_{\tau h}=f_h^t, \forall h\in Q_1 \right\}\subseteq E_V.
\]
Hence, for an edge $h\in Q_1$ fixed under the involution,
\[
    f_h\in \mathsf{S}^2 (V_{t(h)}),
\]
the second symmetric power of $V_{t(h)}$.
 
Recently, Lu and Zhang found the Drinfeld new presentation for all the quasi-split twisted Yangians \cite{LZ24}. Based on this, we can define the shifted twisted Yangians $\mathbf{Y}^{\tau}_{\mu}(\mathfrak{g})$ for any $\tau$-invariant coweight $\mu$. The following is the main result of this paper.
\begin{theorem*}[\cref{thm:main}]
    There is an algebra homomorphism from the shifted twisted Yangians $\mathbf{Y}^{\tau}_{\mu}(\mathfrak{g})\otimes H_{G_W}^*(\pt)$ to the quantized Coulomb branch algebra associated to
    \[
        (G=G_V^{\tau},\qquad \bfN = E_V^\tau \oplus L_{W,V},\qquad F=G_W).
    \]
\end{theorem*}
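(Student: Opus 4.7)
The plan is to establish the homomorphism by the standard BFN/KWWY strategy: exhibit explicit images for the Drinfeld new generators of $\mathbf{Y}^{\tau}_{\mu}(\mathfrak{g})$ in the Lu--Zhang presentation \cite{LZ24} inside the quantized Coulomb branch algebra, and then verify the defining relations. The whole construction closely parallels \cite{SSX25} for the wedge case $E_V^{\tau,-}$, and the novelty lies entirely in tracking how the Euler class contribution at a $\tau$-fixed edge changes when $\wedge^2(V_{t(h)})$ is replaced by $\mathsf{S}^2(V_{t(h)})$. The expectation is that this swap amounts to a uniform sign flip together with a half-integer shift in the numerators/denominators of the GKLO-type rational functions, so that the bulk of the verification transports verbatim from the wedge setting.

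First I would set up the candidate map. For each $\tau$-orbit $\{i,\tau i\}$ on $Q_0$ (all of size two by the standing assumption $\tau(i)\neq i$), I would send the Cartan field $h_i(u)$ to a product of shifted equivariant Chern roots of the tautological bundle for one representative of the orbit, weighted by the Euler class of $\mathbf{N}=E_V^{\tau}\oplus L_{W,V}$ and the Euler class of the loop-rotation-fixed matter, in the spirit of \cite{KWWY14,BFN19,LWW,LWW2}. The raising/lowering fields $e_i(u)$ and $f_i(u)$ would be sent to the dressed minuscule monopole operators associated to the fundamental cocharacters of the $\GL$ factors of $G_V^{\tau}$, dressed by the appropriate rational Euler factors dictated by $E_V^{\tau}$. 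The flavor part $H^{*}_{G_W}(\pt)$ maps by the obvious embedding of equivariant parameters; since it commutes with every other contribution, the tensor-product structure of the source algebra is automatic once the rest of the map is in place.

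Next I would verify the Drinfeld relations of \cite{LZ24} in four groups: the $hh$ commutation, the $he$ and $hf$ commutations, the $ef$ relation, and the twisted Serre relations. The first three groups reduce, after applying the Braverman--Finkelberg--Nakajima localization theorem, to identities of rational functions in the equivariant and loop parameters; these are formally identical to those checked in \cite{SSX25} up to substituting the Euler class of $\mathsf{S}^2(V_{t(h)})$ for that of $\wedge^2(V_{t(h)})$ at each fixed edge. The substitution shifts the relevant arguments by $\pm\hbar/2$ and flips one overall sign, but it does not destroy the factorization patterns on which the cancellations rely, so these relations follow immediately.

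The main obstacle, as always in such constructions, will be the twisted Serre relations at an index adjacent to a $\tau$-fixed edge, where the matter content enters in an essential way. Here the plan is to localize to the torus fixed points, reduce the Serre identity to a universal identity in two (or three) auxiliary variables, and match it against the rescaled Serre relation of \cite{LZ24}. The symmetric-power replacement affects precisely the coefficient of the cubic term in this identity, and the expectation, guided by Nakajima's proposal \cite{nakajima2025instantons}, is that the correct matching requires a rescaling of the $e_i(u)$ and $f_i(u)$ generators by an explicit square root of a ratio of Euler classes. Identifying this rescaling unambiguously, and showing that it makes both sides of the Serre relation agree at the level of rational functions in the BFN model, is the technical core of the argument, and is what this paper will carry out in detail.
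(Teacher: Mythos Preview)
Your overall strategy---define explicit images for the Drinfeld generators as dressed minuscule monopole operators and verify the Lu--Zhang relations---is the same as the paper's. But two of your anticipated details are off in ways that would send you down the wrong path.

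First, the Lu--Zhang presentation (Definition~\ref{shiftedTY}) has generators $h_{i,r}$ and $b_{i,s}$, not separate $e_i/f_i$ fields; the raising/lowering dichotomy is played by $b_i$ versus $b_{\tau i}$, and the analogue of the $ef$ relation is the $b_i$--$b_{\tau i}$ commutator in \eqref{eq:bcomm} and \eqref{eq:commSerre}. The verification is organized around the $h$--$b$ relation \eqref{eq:hbcomm(z)}, the $b$--$b$ relation \eqref{eq:bcomm(z)}, and the Serre relations, so framing things in terms of $e,f$ will not match the actual checklist.

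Second, and more substantively, no square-root rescaling is needed anywhere. The images of $b_{i,r}$ are genuine dressed monopole classes up to signs and a factor of $\hbar^{-1}$ (Theorem~\ref{thm:main}, via Propositions~\ref{prop:fmono} and~\ref{prop:fmono2}); the passage from $\wedge^2$ to $\mathsf{S}^2$ shows up only as an extra linear Euler factor $(2x_{i,r}+\tfrac{3\hbar}{2})$ in the numerator of $B_i(u)$, not as a square root. The $\imath$Serre relation \eqref{eq:iSerre} at a $\tau$-fixed edge is not handled by rescaling but by a reduction trick (Proposition~\ref{prop:finite=>affine}, imported from \cite{SSX25}): once \eqref{eq:hcomm}--\eqref{eq:bcomm} are checked, the general $\imath$Serre relation collapses to its degree-zero instance $k_1=k_2=r=0$, which is then a direct residue computation in the difference algebra (Section~\ref{iSerre(z)}, using Lemmas~\ref{lem:resH}, \ref{lem:Hiat3h/2}, \ref{lem:yserre1}, \ref{lem:yserre2}). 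So the technical core is one explicit rational-function identity, not the identification of a rescaling.

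Structurally, the paper also separates the two steps more cleanly than you propose: it first establishes a GKLO-type representation $\psi$ into the difference algebra $\Diff_\hbar(T_V^\tau)\otimes H_{G_W}^*(\pt)$ (Theorem~\ref{thm:gklo}), and only then observes that the images $B_i(u)$, $H_i(u)$ already coincide with classes in the embedded Coulomb branch, so $\psi$ factors through $\mathcal{A}_\hbar[\hbar^{-1}]$. Your plan to work directly in $\mathcal{A}_\hbar$ via localization is equivalent in principle, but the two-step route avoids repeatedly arguing that intermediate expressions lie in the Coulomb branch.
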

As in \cite{SSX25}, we first establish a Gerasimov--Kharchev--Lebedev--Oblezin (GKLO)-type representation \cite{GKLO05} for $\mathbf{Y}^{\tau}_{\mu}(\mathfrak{g})$ via some difference operators, see \cref{thm:gklo}. On the other hand, Braverman--Finkelberg--Nakajima showed that the quantized Cou\-lomb branch algebra can be embedded into some difference algebra, and the image for the dressed minuscule operators can be computed explicitly. With these, we show that the GKLO-type representation factors through the quantized Coulomb branch algebra, thus proving the main result.

\subsection*{Acknowledgment} This work builds directly upon the results of \cite{SSX25}. The author is grateful to Shen, Su, and Xiong for generously explaining their work. He also sincerely thanks Professor Nakajima for posing the problem of calculating the quantized Coulomb branch algebra for the second symmetric power case. The author is supported by the National Key R\&D Program of China (No. 2024YFA1014700) from his advisor C. Su.

\section{Coulomb branches of cotangent type}

\subsection{BFN Coulomb Branch}
We give a brief review of the Coulomb branch in this section, following the notations in \cite{BFN18}. Let $G$ be a complex reductive group with a Borel subgroup $B$ and a maximal torus $T$ with Weyl group $W$. Let $X_*(T)$ be the cocharacter lattice of $T$ with dominant ones denoted by $X_*(T)^+$. For any $\lambda,\mu\in X_*(T)^+$, $\lambda\leq \mu$ iff $\mu-\lambda$ is a nonnegative linear combination of the positive coroots. Let $\calO=\mathbb{C}\llbracket z \rrbracket $ and $\calK=\mathbb{C}(\!(z)\!)$. We write $G_\calK:=G(\calK)$, $G_\calO:=G(\calO)$ and $T_\calK:=T(\calK)$, respectively. Any cocharacter $\lambda:\mathbb{G}_m\rightarrow T$ give a homomorphism $\mathcal{K}^*\rightarrow T_\mathcal{K}$, and we let $z^\lambda$ denote the image of $z\in \mathcal{K}^*$.

Recall the affine Grassmannian is defined to be $\Gr_G=G_\calK/G_\calO$. By the Cartan decomposition,
\[
    \Gr_G=\bigsqcup_{\lambda\in X_*(T)^+}\Gr_G^\lambda,
\]
where $\Gr_G^\lambda:=G_\calO z^\lambda G_\calO/G_\calO$. Its closure is $\bGr_G^\lambda=\bigsqcup_{\mu\in X_*(T)^+,\mu\leq \lambda}\Gr_G^\mu$.

Let $\bfN$ be a complex representation of $G$. Let $\bfN_\calK:=\bfN(\calK)$ and $\bfN_\calO:=\bfN(\calO)$, respectively. Recall the Braverman--Finkelberg--Nakajima (BFN) space is
\[
    \calR_{G,\bfN} = \{(gG_\calO,x): x\in g\bfN_\calO\cap \bfN_\calO\}\subset \Gr_G\times \bfN_\calO.
\]
Following \cite[Section 2(i)]{BFN18}, we consider the $\mathbb{C}^\times$ action on $\calR_{G,\bfN}$, which rotates $z\in \calO$ by weight $1$ and scales $\bfN$ by weight $\frac{1}{2}$ simultaneously. Let $\hbar$ be the equivariant parameter of this $\mathbb{C}^*$, and denote this $\mathbb{C}^*$ by $\mathbb{C}^\times_\hbar$. The \emph{(quantized) Coulomb branch algebra} is defined to be the Borel--Moore homology
\[
    \mathcal{A}_\hbar(G,\bfN) = H^{G_{\calO}\rtimes\mathbb{C}_\hbar^\times}_*(\calR_{G,\bfN}).
\]
By \cite{BFN18}, $\mathcal{A}_\hbar(G,\bfN)$ admits a convolution product. 

Let $\mathfrak{t}$ be the Lie algebra of $T$, and $\mathbb{A}^1$ be the Lie algebra of $\mathbb{C}^*_\hbar$. We define the \emph{difference algebra} $\Diff_\hbar(T)$ of $T$ to be 
\[
    \Diff_\hbar(T)=\mathbb{C}(\mathfrak{t}\times \mathbb{A}^1)\rtimes X_*(T),
\]
where $\mathbb{C}(\mathfrak{t}\times \mathbb{A}^1)$ is the field of rational functions over $\mathfrak{t}\times \mathbb{A}^1$. 
I.e.,
\[
    (f(t,\hbar)d_\lambda)\cdot (g(t,\hbar)d_\mu)=f(t,\hbar)g(t+\lambda\hbar,\hbar)d_{\lambda+\mu}.
\]
By \cite{BFN18}, there is an embedding of algebras
\begin{equation}\label{eq:embeddiff}
\varphi:\mathcal{A}_\hbar(G,\bfN)\hookrightarrow \Diff_\hbar(T). 
\end{equation}
Therefore, we can view elements in the quantized Coulomb branch algebra $\mathcal{A}_\hbar(G,\bfN)$ as some difference operators. For the later applications, we need to review some explicit formula for some special elements in $\mathcal{A}_\hbar(G,\bfN)$.

Recall there is a natural projection 
\[
    \pi:\calR_{G,\bfN}\longrightarrow \Gr_G.
\]
Let us denote 
$ \calR_{\lambda}=\pi^{-1}(\Gr_G^\lambda) $ and $ \calR_{\leq \lambda} = \pi^{-1}(\bGr_G^\lambda) $. The quantized Coulomb branch algebra $\mathcal{A}_\hbar(G,\bfN)$ is filtered by dominant coweights, and the associated graded algebra is
\[
    \gr \mathcal{A}_\hbar(G,\bfN)\simeq\bigoplus_{\lambda\in X_*(T)^+} H_*^{G_\calO\rtimes\mathbb{C}^\times_\hbar}(\calR_\lambda)\simeq\bigoplus_{\lambda\in X_*(T)^+} \mathbb{C}[\mathfrak{t}\times\mathbb{A}^1]^{W_\lambda}[\calR_\lambda],
\]
where $[\calR_\lambda]$ is the fundamental class of $\calR_{\lambda}$. 

For a minuscule cocharacter $ \lambda $, $\Gr_\lambda=\overline{\Gr_\lambda}=G/P_\lambda$, where $P_\lambda$ is a parabolic subgroup whose associated Weyl group is $W_\lambda$, the stabilizer of $\lambda$ inside $W$. Hence, $\calR_{\lambda}$ is also closed. Moreover,
\[
    H_*^{G_\calO\rtimes\mathbb{C}^\times_\hbar}(\calR_\lambda) \simeq H_*^{G_\calO\rtimes\mathbb{C}^\times_\hbar}(\Gr_\lambda) \simeq \mathbb{C}[\mathfrak{t}\times\mathbb{A}^1]^{W_\lambda}.
\]
For $f\in \mathbb{C}[\mathfrak{t}\times\mathbb{A}^1]^{W_\lambda}\simeq H_*^{G_\calO\rtimes\mathbb{C}^\times_\hbar}(\calR_\lambda)$, the element $ f[\calR_\lambda]\in \gr\mathcal{A}_\hbar(G,\bfN) $ lifts to an element in $\mathcal{A}_\hbar(G,\bfN)$, and via the embedding \eqref{eq:embeddiff}, it is sent to
\begin{equation}\label{eq:monopoleop}
    \varphi(f[\calR_\lambda])=\sum_{w\in W^\lambda}w\left(f\cdot \frac{\Eu\left(z^\lambda\bfN_\calO/(z^\lambda\bfN_\calO\cap\bfN_\calO)\right)}{\Eu(T_{\lambda}\Gr_\lambda)}\right)d_{w\lambda},
\end{equation}
where $W^\lambda$ is the set of minimal length representatives of cosets in $W/W_\lambda$,  $\Eu(-)$ is the $T\times\mathbb{C}^*_\hbar$-equivariant Euler class, i.e. the product of weights, and $T_{\lambda}\Gr_\lambda$ is the tangent space of $\Gr_\lambda$ at the torus fixed point $z^\lambda G_\calO/G_\calO$. The operators $f[\calR_\lambda]$ are called \textit{dressed monopole operators}. We refer to \cite[Section 2.1]{SSX25} for more details about the computation of these Euler classes.

Let $F$ be a reductive group and $\tilde{G}=G\times F$. We assume further that the $G$-representation $\bfN$ can be extended to a $\tilde{G}$-representation. Then we can slightly extend the above definition by 
\[
    \mathcal{A}_\hbar := H^{(G_{\calO}\times F_{\calO})\rtimes\mathbb{C}_\hbar^\times}_*(\calR_{G,\bfN}).
\]
The dependence of $F$ will be clear from the context. 
Then the embedding \eqref{eq:embeddiff} becomes
\[
    \mathcal{A}_\hbar\hookrightarrow H_F^*(\pt)\otimes\Diff_\hbar(T).
\]
The results in this section still hold.

\subsection{Quiver with involution}\label{sec:quiverinv}
Let $Q=(Q_0,Q_1,s,t)$ be a quiver, where $Q_0$ is the set of vertices while $Q_1$ is the set of arrows. For any $h\in Q_1$, $s(h)$ (resp. $t(h)$) denotes the source (resp. target) of the edge $h$. For two $Q_0$-graded vector spaces $W,V$ with dimension vectors $\mathbf{w}=(w_i)_{i\in Q_0},\mathbf{v}=(v_i)_{i\in Q_0}$, let us denote 
\begin{align*}
    G_V &= \prod_{i\in Q_0}GL(V_i), & G_W &= \prod_{i\in Q_0}GL(W_i),\\
    E_V &= \bigoplus_{h\in Q_1}\Hom(V_{s(h)},V_{t(h)}), & L_{W,V} &= \bigoplus_{i\in Q_0}\Hom(W_i,V_i).
\end{align*}

We now assume that there is an involution $\tau$ on $Q=(Q_0,Q_1,s,t)$ satisfying
\begin{itemize}
    \item 
    $s(\tau(h))=\tau(t(h))$ and $ t(\tau(h))=\tau(s(h))$; 
    \item $\tau(s(h))=t(h)$ if and only if $\tau(h)=h$. 
\end{itemize}

The pair $(Q,\tau)$ is called a \emph{quiver with involution} in the literature; see \cite{EK07} and \cite{VV11}. Assume $\bigoplus_{i\in Q_0}V_i$ is equipped with a nondegenerate symmetric bilinear form $\langle-,-\rangle$ such that the orthogonal complement $V_i^\perp=\bigoplus_{j\neq \tau i} V_j$. In particular, $\langle-,-\rangle$ restricts to a perfect pairing between $V_i$ and $V_{\tau i}$. Hence we can identify $V_i=V_{\tau i}^*$ and $\mathbf{v}$ is $\tau$-invariant. Let us denote 
\begin{align*}
    G_V^{\tau} &=\left\{g\in G_V\mid g_i^t = g_{\tau i}^{-1}, \forall i\in Q_0\right\}\subseteq G_V,\\
    E_V^\tau   &=\left\{f\in E_V\mid f_{\tau h}=f_h^t, \forall h\in Q_1 \right\}\subseteq E_V.
\end{align*}
Hence, $G_V^\tau$ acts on $E_V^\tau$. Similar to \cite{SSX25}, we further assume the quiver $Q$ is simply-laced without self-loop, and $ \tau $ has no fixed points in $ Q_0 $. With these assumptions, we will study the Coulomb branch algebra for
\[
    G=G_V^{\tau},\qquad \bfN = E_V^\tau \oplus L_{W,V},\qquad F=G_W.
\]

In order to make the computation more precise, we can pick a decomposition $Q_0=Q_0^{+}\sqcup Q_0^-$ such that $i\in Q_0^{+}$ if and only if $\tau i\in Q_0^-$. Then the composition 
\[
    G_V^{\tau}\hookrightarrow G_V\stackrel{\text{pr}}\longrightarrow \prod_{i\in Q_0^+}GL(V_i)
\]
is an isomorphism. Let $Q_1^\tau$ be the subset of edges in $Q_1$, which are fixed by the involution $\tau$. For a decomposition $Q_1\setminus Q_1^\tau=Q_1^+ \sqcup Q_1^-$ such that $h\in Q_1^{+}$ if and only if $\tau(h)\in Q_1^-$, the composition
\[
    E_V^\tau \hookrightarrow E_V
    \stackrel{\text{pr}}\longrightarrow 
    \bigoplus_{h\in Q_1^+}
    \Hom(V_{s(h)},V_{t(h)})
    \oplus 
    \bigoplus_{h\in Q_1^\tau}
    \mathsf{S}^2 V_{t(h)}
\]
is an isomorphism, where $\mathsf{S}^2 V_{t(h)}$ can be viewed as a quotient of $\Hom(V_{s(h)},V_{t(h)})$ if $h\in Q_1^\tau$. 

\begin{remark}
    The case 
    \begin{align*}
        E_V^{\tau,-}:=&\left\{f\in E_V\mid f_{\tau h} = -f_h^t, \forall h\in Q_1 \right\}\\
        \simeq& \bigoplus_{h\in Q_1^+}\Hom(V_{s(h)},V_{t(h)})\oplus \bigoplus_{h\in Q_1^\tau} \wedge^2 V_{t(h)}
    \end{align*} is studied in \cite{SSX25}. Both the second wedge power and the second symmetric power cases are considered by Nakajima, see \cite[Section 3(ix)(C)]{nakajima2025instantons}.
\end{remark}

\begin{example}[Type AIII]
    Consider a quiver with involution obtained from a Satake diagram of type AIII (see \cite{Ara62}), i.e. 
    \[
        Q_0=\{1,\ldots,2n\},\qquad\tau i = 2n+1-i
    \]
    and $Q_1$ is chosen such that 
    \[
    \texttt{\#}\big\{h\in Q_1:\{s(h),t(h)\}=\{i,j\}\big\}=\delta_{|i-j|,1}.
    \]
    We can choose $Q_0^+=\{1,\ldots,n\}$ and $Q_1^+=\{h\in Q_1: \max\{s(h),t(h)\}\leq n\}$. For example, when $n=3$ we have
    \[
        \begin{tikzpicture}[scale=1, 
                            arrow/.style={red, <->, thick}]
            \foreach \i in {0,...,5}
                \node[circle, draw, inner sep=3pt] (a\i) at (\i,0) {};
                \foreach \i in {0,...,5}
                \node (b\i) at (\i,.05) {};
                \foreach \i in {1,...,6}
                \node  at (\i-1,-.5) {$\scriptstyle \i$};
            \draw[arrow] (b0) to[bend left=50] (b5);
            \draw[arrow] (b1) to[bend left=40] (b4);
            \draw[arrow] (b2) to[bend left=30] (b3);
            \draw[->,ultra thick] (a0) -- (a1); 
            \draw[->,ultra thick] (a2) -- (a1); 
            \draw[->,ultra thick] (a2) -- (a3); 
            \draw[->,ultra thick] (a4) -- (a3); 
            \draw[->,ultra thick] (a4) -- (a5); 
        \end{tikzpicture},
    \]
    where the red arrow denotes the involution. Then we have 
    \begin{align*}
        G_V^{\tau} & \cong GL(v_1)\times GL(v_2)\times GL(v_3)\\
        E_V^\tau & \cong \Hom(\mathbb{C}^{v_1},\mathbb{C}^{v_2})\oplus 
        \Hom(\mathbb{C}^{v_3},\mathbb{C}^{v_2})\oplus 
        \mathsf{S}^2(\mathbb{C}^{v_3}).
    \end{align*}
\end{example}

Let us pick a basis $\{e_{i,1},\ldots,e_{i,v_i}\}$ of each $V_i$ such that $\langle e_{i_1,j_1},e_{i_2,j_2}\rangle = \delta_{i_1,\tau i_2 }\delta_{j_1,j_2}$. This gives a choice of a maximal torus $T_V^\tau$ of $G_V^{\tau}$. Then we can identify 
\[
    H_{T_V^\tau}^*(\pt) = \bigotimes_{i\in Q_0^+}\mathbb{Q}[x_{i,1},\ldots,x_{i,v_i}],
\]
and
\[
    X_*(T^\tau_V) = \bigoplus_{i\in Q_0^+}\mathbb{Z}\epsilon_{i,1} \oplus \cdots \oplus \mathbb{Z}\epsilon_{i,v_i}.
\]
Here $\epsilon_{i,j}$ is the $j$-th coordinate on the subtorus $T_{V_i}$ and $x_{i,j}$ is the $j$-th character of $T_{V_i}$. A cocharacter $\lambda=\sum\lambda_{i,j}\epsilon_{i,j}\in X_*(T^\tau_V)$ is dominant if $\lambda_{i,1}\geq \cdots \geq \lambda_{i,v_i}$ for each $i\in Q_0^+$. For $i\in Q_0^+$ and $1\leq j\leq v_i$, we denote the difference operator of $\epsilon_{i,j}$ by $d_{i,j}\in \Diff_\hbar(T_V^\tau)$. Let us introduce $x_{i,j}=-x_{\tau i,j}$ and $d_{i,j}=d_{\tau i,j}^{-1}$ for $i\in Q_0^-$. Hence, we have the following relation in $\Diff_\hbar(T_V^\tau)$:
\[
    d_{i_1,j_1}x_{i_2,j_2}=(x_{i_2,j_2}+(\delta_{i_1,i_2}-\delta_{i_1,\tau i_2 })\delta_{j_1,j_2}\hbar)d_{i_1,j_1}
\]
for any $i_1,i_2\in Q_0$, $1\leq j_1\leq v_{i_1}$ and $1\leq j_2\leq v_{i_2}$. Let us pick a basis $\{f_{i,1},\ldots,f_{i,w_i}\}$ of each $W_i$. This gives a choice of a maximal torus $T_W$ of $G_W$, and we can identify 
\[
    H_{G_W}^*(\pt) = \bigotimes_{i\in Q_0}\mathbb{Q}[w_{i,1},\ldots,w_{i,w_i}]^{S_{w_i}},
\]
where $S_{w_i}$ is the symmetric group.

\subsection{Monopole operators}\label{sec:mono}
In this section, we give explicit formulae for some monopole operators of the quantized Coulomb branch algebra
\[
    \mathcal{A}_\hbar := H^{(G_{\calO}\times F_{\calO})\rtimes\mathbb{C}_\hbar^\times}_*(\calR_{G,\bfN})
\]
associated to $ G=G_V^{\tau},\bfN = E_V^\tau \oplus L_{W,V},F=G_W $. Recall that it can be embeded into the difference algebra $H_{G_W}^*(\pt)\otimes\Diff_\hbar(T_V^\tau)$.

For $i\in Q_0$, define the following polynomials
\[
    V_i(z) = \prod_{k=1}^{v_i}(z-x_{i,k}), \quad W_i(z) = \prod_{k=1}^{w_i}(z-w_{i,k}).
\]
Moreover, for $1\leq r\leq v_i$, let
\[
    V_{i,r}(z)=\frac{V_i(z)}{z-x_{i,r}}.
\]

For $i\in Q_0^+$, the dominant coweight $\epsilon_{i,1}\in X_*(T_V^\tau)$ is a minuscule coweight for $G$, and the spherical Schubert cell $\Gr_{\epsilon_{i,1}}$ is closed and isomorphic to the projective space $\mathbb{P}^{v_i-1}$. More precisely, it is identified with the moduli space of $\calO$-modules $L$ such that 
\[
    z\calO\otimes V_i\subset L\subset \calO\times V_i,\quad \dim_\mathbb{C}\calO\otimes V_i/L =1.
\]
There is a tautological line bundle on $\Gr_{\epsilon_{i,1}}$ whose fiber at $L$ is $\calO\otimes V_i/L$. Thus, the torus weight of this line bunlde at the fixed point $z^{\epsilon_{i,r}}$ is $x_{i,r}$. Let $\mathcal{Q}_i$ denote the pullback of this line bundle to $\calR_{\epsilon_{i,1}}$.

By the same argument as in \cite[Proposition 2.4]{SSX25}, we get the following explicit formulae.
\begin{prop}\label{prop:fmono}
    Let $f\in \mathbb{Q}[x]$ be a polynomial in one variable. We have 
    \begin{align*}
        &f(c_1(\mathcal{Q}_i)) \cap [\mathcal{R}_{\epsilon_{i,1}}]
        =\sum_{r=1}^{v_i}f(x_{i,r})
        \prod_{\substack{h\in Q_1\setminus Q_1^\tau \\ s(h)=i}} (-1)^{v_{t(h)}}V_{t(h)}(x_{i,r}+\tfrac{\hbar}{2})\\
        &\cdot \prod_{\substack{h\in Q_1^\tau \\ s(h)=i}}
        (-1)^{v_i-1}(2x_{i,r}+\tfrac{3\hbar}{2})
        V_{t(h)}(x_{i,r}+\tfrac{\hbar}{2}). \frac{W_{\tau i}(x_{\tau i,r}-\tfrac{\hbar}{2})}{V_{i,r}(x_{i,r})}d_{i,r}\in H_{G_W}^*(\pt)\otimes\Diff_\hbar(T_V^\tau).
    \end{align*}
\end{prop}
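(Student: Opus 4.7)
The plan is to specialize the general monopole-operator formula \eqref{eq:monopoleop} to $\lambda = \epsilon_{i,1}$, which is minuscule for $G = G_V^\tau$. Since $\Gr_{\epsilon_{i,1}} \cong \mathbb{P}^{v_i-1}$ has $T_V^\tau$-fixed points $z^{\epsilon_{i,r}}$ for $r = 1, \ldots, v_i$, the set $W^{\epsilon_{i,1}}$ is indexed by $r$, with the distinguished coset representative acting on $x$-variables by the transposition $x_{i,1} \leftrightarrow x_{i,r}$. This sends $f(c_1(\mathcal{Q}_i))$ to $f(x_{i,r})$ and $d_{i,1}$ to $d_{i,r}$. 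A standard tangent-space computation on $\mathbb{P}^{v_i-1}$ gives the denominator $\Eu(T_{\epsilon_{i,r}}\Gr_{\epsilon_{i,1}}) = V_{i,r}(x_{i,r})$, with no $\hbar$-correction since $\lambda$ is minuscule.

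For the numerator $\Eu(z^{\epsilon_{i,r}}\bfN_\calO/(z^{\epsilon_{i,r}}\bfN_\calO \cap \bfN_\calO))$, I would decompose according to $\bfN = E_V^\tau \oplus L_{W,V}$ and apply the general rule that a $T_V^\tau$-weight $\mu$ of $\bfN$ with $\langle \mu, \epsilon_{i,r}\rangle = -m < 0$ contributes $m$ quotient weights $\mu - \tfrac{(2k-1)}{2}\hbar$ for $k = 1, \ldots, m$, arising from $z^{-1}, \ldots, z^{-m}$ together with the $\tfrac{\hbar}{2}$-scaling of $\bfN$. The $L_{W,V}$ factor comes solely from $\Hom(W_{\tau i}, V_{\tau i})$ and evaluates to $W_{\tau i}(x_{\tau i, r} - \tfrac{\hbar}{2})$. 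For $E_V^\tau$, each $\tau$-orbit in $Q_1 \setminus Q_1^\tau$ containing a source-$i$ representative $h$ contributes a $\Hom(V_i, V_{t(h)})$-summand, which yields $(-1)^{v_{t(h)}} V_{t(h)}(x_{i,r} + \tfrac{\hbar}{2})$; at most one orbit-member can have source $i$, since $s(h) = i$ together with $t(h) = \tau i$ would force $h \in Q_1^\tau$ by the second bullet point of \cref{sec:quiverinv}.

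The main new step, replacing the wedge-power analysis of \cite[Proposition 2.4]{SSX25}, is the case $h \in Q_1^\tau$ with $s(h) = i$, in which $t(h) = \tau i$ and $E_V^\tau$ contains the summand $\mathsf{S}^2 V_{\tau i}$. On the symmetric basis $\{e_{\tau i, k} \cdot e_{\tau i, l} : k \le l\}$, the $T_V^\tau$-weight is $-x_{i,k} - x_{i,l}$, with $\epsilon_{i,r}$-pairing equal to $-\delta_{k,r} - \delta_{l,r}$. The off-diagonal vectors $\{e_{\tau i, k} \cdot e_{\tau i, r} : k \neq r\}$ each have pairing $-1$ and contribute a single quotient weight $-x_{i,k} - x_{i,r} - \tfrac{\hbar}{2}$; the diagonal vector $e_{\tau i, r}^2$, which has no analog in $\wedge^2 V_{\tau i}$, has pairing $-2$ and produces two quotient weights $-2x_{i,r} - \tfrac{\hbar}{2}$ and $-2x_{i,r} - \tfrac{3\hbar}{2}$. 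Multiplying these and using the identity $V_{t(h)}(x_{i,r} + \tfrac{\hbar}{2}) = (2x_{i,r} + \tfrac{\hbar}{2}) \prod_{k \neq r}(x_{i,r} + x_{i,k} + \tfrac{\hbar}{2})$ collapses the product into $(-1)^{v_i - 1}(2x_{i,r} + \tfrac{3\hbar}{2}) V_{t(h)}(x_{i,r} + \tfrac{\hbar}{2})$; the new factor $(2x_{i,r} + \tfrac{3\hbar}{2})$ is precisely the $z^{-2}$-contribution of the diagonal basis element, which is exactly what distinguishes $\mathsf{S}^2$ from $\wedge^2$.

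Assembling the four factors above, multiplying by $f(x_{i,r}) d_{i,r}$, and summing over $r = 1, \ldots, v_i$ then yields the stated formula. The only genuinely non-routine point in the argument is the symmetric-power Euler-class computation in the third paragraph; every other piece is a direct minuscule specialization of \eqref{eq:monopoleop} paralleling the proof of \cite[Proposition 2.4]{SSX25}.
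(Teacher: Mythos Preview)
Your proposal is correct and follows exactly the approach the paper intends: the paper's own proof consists of the single sentence ``By the same argument as in \cite[Proposition 2.4]{SSX25}, we get the following explicit formulae,'' so you have simply written out what that argument becomes in the symmetric-power setting. In particular, your third paragraph---the $\mathsf{S}^2 V_{\tau i}$ Euler-class computation isolating the extra diagonal weight $-2x_{i,r}-\tfrac{3\hbar}{2}$---is precisely the modification of the $\wedge^2$ case in \cite{SSX25} that the paper is implicitly invoking.
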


Now let us consider the case of $\lambda=-\epsilon_{i,v_i}\in X_*(T_V^\tau)$, where $i\in Q_0^+$. This is also a minuscule coweight, and the spherical Schubert cell $\Gr_{-\epsilon_{i,v_i}}$ is also isomorphic to the projective space $\mathbb{P}^{v_i-1}$. More precisely, it is identified with the moduli space of $\calO$-modules $L$ such that 
\[
    \calO\otimes V_i\subset L\subset z^{-1}\calO\times V_i,\quad \dim_\mathbb{C}L/(\calO\otimes V_i) =1.
\]
There is a tautological line bundle on $\Gr_{-\epsilon_{i,v_i}}$ whose fiber at $L$ is $L/(\calO\otimes V_i)$. Therefore, the torus weight of the fiber at the torus fixed point $z^{-\epsilon_{i,r}}$ is $x_{i,r}-\hbar$. Let $\mathcal{S}_i$ denote the pullback of this line bundle to $\calR_{-\epsilon_{i,v_i}}$. Similar as above, we get
\begin{prop}\label{prop:fmono2}
    Let $f\in \mathbb{Q}[x]$ be a polynomial in one variable. We have 
    \begin{align*}
        &f(c_1(\mathcal{S}_i)) \cap [\mathcal{R}_{-\epsilon_{i,v_i}}]
        =\sum_{r=1}^{v_i}f(x_{i,r}-\hbar)
        \prod_{\substack{h\in Q_1\setminus Q_1^\tau \\ s(h)=\tau i}}(-1)^{v_{t(h)}}V_{t(h)}(x_{\tau i,r}+\tfrac{\hbar}{2})\\
        &\cdot\prod_{\substack{h\in Q_1^\tau \\ t(h)=i}}
        (-1)^{v_i}(2x_{i,r}-\tfrac{3\hbar}{2})
        V_{i}(x_{\tau i,r}+\tfrac{\hbar}{2})\frac{W_{i}(x_{i,r}-\tfrac{\hbar}{2})}{(-1)^{v_i-1}V_{i,r}(x_{i,r})}d_{i,r}^{-1}\in H_{G_W}^*(\pt)\otimes\Diff_\hbar(T_V^\tau).
    \end{align*}
\end{prop}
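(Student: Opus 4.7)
The plan is to apply the explicit formula~\eqref{eq:monopoleop} to the antidominant minuscule coweight $\lambda=-\epsilon_{i,v_i}$, in direct parallel with the proof of \cref{prop:fmono}. Minusculeness ensures $W^\lambda$ is indexed by $r\in\{1,\ldots,v_i\}$ with $w_r\cdot\lambda=-\epsilon_{i,r}$, producing the outer sum $\sum_{r=1}^{v_i}$ and the shift operator $d_{-\epsilon_{i,r}}=d_{i,r}^{-1}$. At the torus fixed point $z^{-\epsilon_{i,r}}$, the line bundle $\mathcal{S}_i$ has fiber $\mathbb{C}\cdot z^{-1}e_{i,r}$, whose $T_V^\tau\times\mathbb{C}^*_\hbar$-weight is $x_{i,r}-\hbar$; accordingly $w_r(f(c_1(\mathcal{S}_i)))$ specializes to $f(x_{i,r}-\hbar)$.

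Next I would compute the two Euler classes. The tangent space $T_{-\epsilon_{i,r}}\Gr^{-\epsilon_{i,v_i}}\simeq V_i/\mathbb{C}e_{i,r}$ has weights $x_{i,s}-x_{i,r}$ for $s\neq r$, giving $\Eu(T)=(-1)^{v_i-1}V_{i,r}(x_{i,r})$. For $\Eu(z^\lambda\bfN_\calO/(z^\lambda\bfN_\calO\cap\bfN_\calO))$, I enumerate the $T_V^\tau$-weights $\mu$ of $\bfN$ with $\langle\lambda,\mu\rangle<0$, i.e.\ those containing $+x_{i,v_i}$: \emph{(i)} in $\bigoplus_{h\in Q_1^+}\Hom(V_{s(h)},V_{t(h)})$, the poles come from $h$ with $t(h)=i$ and from $h$ with $s(h)=\tau i$, which, after the involution $h\leftrightarrow\tau h$, package cleanly into the single product $\prod_{h\in Q_1\setminus Q_1^\tau,\,s(h)=\tau i}(-1)^{v_{t(h)}}V_{t(h)}(x_{\tau i,r}+\hbar/2)$; \emph{(ii)} in $\bigoplus_{h\in Q_1^\tau}\mathsf{S}^2V_{t(h)}$, only $t(h)=i$ contributes, producing an order-$2$ pole at the diagonal $a=b=v_i$ together with $v_i-1$ simple poles---absorbing one of the two diagonal $\hbar$-shifts into $V_i(x_{\tau i,r}+\hbar/2)$ leaves the external factor $(-1)^{v_i}(2x_{i,r}-3\hbar/2)$; \emph{(iii)} in $L_{W,V}$, only $\Hom(W_i,V_i)$ contributes, yielding $W_i(x_{i,r}-\hbar/2)$. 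Assembling these pieces and dividing by $\Eu(T)$ reproduces the stated formula.

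The main obstacle is sign bookkeeping: converting each Euler-class weight to the polynomial form $V_*$ or $W_*$ inserts a sign $(-1)^{\dim}$ per factor, the identification $x_{\tau i,r}=-x_{i,r}$ interacts nontrivially with the $\mathsf{S}^2$ contribution (producing the external $(-1)^{v_i}$ and the argument $x_{\tau i,r}+\hbar/2$ of $V_i$), and---crucially different from \cref{prop:fmono}---the $\hbar$-shifts in the antidominant case run over the negative half-integers $-\hbar/2,-3\hbar/2,\ldots$ rather than the positive ones, which is precisely what converts $(2x_{i,r}+3\hbar/2)$ into $(2x_{i,r}-3\hbar/2)$. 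Once these sign and shift conventions are tracked consistently, the computation is a direct analogue of \cref{prop:fmono}.
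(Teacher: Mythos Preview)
Your proposal is correct and follows exactly the approach indicated by the paper, which simply states ``Similar as above, we get'' (referring to \cref{prop:fmono}, itself proved by the same argument as \cite[Proposition 2.4]{SSX25}). You have spelled out in reasonable detail the direct application of \cref{eq:monopoleop} to the antidominant minuscule coweight $-\epsilon_{i,v_i}$, including the key point that the diagonal weight in $\mathsf{S}^2 V_i$ has pairing $-2$ with $\lambda$ and hence contributes two $\hbar$-shifted factors, one of which is absorbed into $V_i(x_{\tau i,r}+\tfrac{\hbar}{2})$ and the other surviving as $(2x_{i,r}-\tfrac{3\hbar}{2})$.
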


\section{Shifted twisted Yangians}\label{sec:Yangian}

Let $C = (c_{ij})_{i,j \in I}$ be a symmetric generalized Cartan matrix, and let $\mathfrak{g}$ be the associated Kac--Moody Lie algebra with Cartan subalgebra $\mathfrak{h}$. We further make the following assumptions: 
\begin{itemize}
    \item there exists an involution $\tau: I \to I$ such that $c_{ij} = c_{\tau i, \tau j}$ for all $i,j \in I$;
    \item $c_{ij} = c_{ji} \in \{0,-1\}$, for any $i \neq j$;
    \item $i \neq \tau i$ for all $i \in I$.
\end{itemize}

Let $\{\alpha_i\in \mathfrak{h}^* \mid i \in I\}$ (resp. $\{\alpha_i^\vee\in \mathfrak{h} \mid i \in I\}$) be the set of simple roots (coroots) associated with $\mathfrak{g}$. Let $\Lambda_i\in \mathfrak{h}$ be the fundamental coweights satisfying
\[
    \langle  \Lambda_i,\alpha_j  \rangle = \delta_{ij}, \qquad i,j \in I,
\]
where $\langle-,-\rangle$ is the natural pairing between $\mathfrak{h}$ and $\mathfrak{h}^*$. Let $P^\vee\subset \mathfrak{h}$ be the lattice generated by $\alpha_i^\vee$ and $\Lambda_i$. The involution $\tau$ on $I$ naturally extends to $P^\vee$ by setting $\tau(\Lambda_i) = \Lambda_{\tau i}$ and $\tau(\alpha_i^\vee)=\alpha_{\tau i}^\vee$.

The definition of the shifted twisted Yangian depends on a choice of a $\tau$-invariant coweight $\mu \in P^\vee$, i.e.\ $\mu = \tau(\mu)$. In order to write down explicit generators, we further fix a family of parameters $\zeta = (\zeta_i)_{i \in I} \in \bigl(\mathbb{C}(\hbar)^\times\bigr)^I$ satisfying 
\begin{equation}\label{eq:zeta}
  \zeta_i = (-1)^{\langle \alpha_i, \mu \rangle}\zeta_{\tau(i)} \in \mathbb{C}(\hbar)^\times.
\end{equation}

Due to the Drinfeld new presentation for twisted Yangian found by Lu and Zhang \cite{LZ24}, we make the following definition, see also \cite{SSX25}. 
\begin{definition}\label{shiftedTY}
    The shifted twisted Yangian $\mathbf{Y}^{\tau}_{\mu}(\mathfrak{g})$ is the $\mathbb{C}(\hbar)$-algebra generated by 
    $h_{i,r}$ and $b_{i,s}$ for $i\in I, r\geq -\langle \alpha_i,\mu\rangle-1$ and $s\geq 0$, subject to the following relations:
    \begin{align}
    \label{eq:hcomm}
        & {\left[h_{i, r}, h_{j, s}\right]=0, \qquad 
        h_{i,s} = (-1)^{s+1} h_{\tau i,s}}, 
            \\
    \label{eq:hbcomm}
        & {\left[h_{i, r+2}, b_{j, s}\right]-\left[h_{i, r}, b_{j, s+2}\right]} 
        =\frac{c_{i j}-c_{\tau i, j}}{2} \hbar\left\{h_{i, r+1}, b_{j, s}\right\}
        \\\notag  & \qquad 
        +\frac{c_{i j}+c_{\tau i, j}}{2} \hbar\left\{h_{i, r}, b_{j, s+1}\right\}
        +\frac{c_{i j} c_{\tau i, j}}{4} \hbar^2\left[h_{i, r}, b_{j, s}\right], 
            \\
    \label{eq:bcomm}
        & {\left[b_{i, r+1}, b_{j, s}\right]-\left[b_{i, r}, b_{j, s+1}\right]=\frac{c_{i j}}{2}\hbar \left\{b_{i, r}, b_{j, s}\right\}-2 \delta_{\tau i, j}(-1)^r h_{j, r+s+1}},
    \end{align}
    and the Serre-type relations  
    \begin{itemize}
        \item when $c_{i j}=0$, we have
        \begin{equation}\label{eq:commSerre}
            \left[b_{i, r}, b_{j, s}\right]=\delta_{\tau i, j}(-1)^r h_{j, r+s};
        \end{equation}
        \item when $c_{i j}=-1, j \neq \tau i \neq i$, we have
        \begin{equation}\label{eq:usualSerre}
            \operatorname{Sym}_{k_1, k_2}\left[b_{i, k_1},\left[b_{i, k_2}, b_{j, r}\right]\right]=0;
        \end{equation}
        \item when $c_{i, \tau i}=-1$, we have
        \begin{equation}\label{eq:iSerre}
            \operatorname{Sym}_{k_1, k_2}\left[b_{i, k_1},\left[b_{i, k_2}, b_{\tau i, r}\right]\right]=\frac{4}{3} \operatorname{Sym}_{k_1, k_2}(-1)^{k_1} \sum_{p=0}^{\infty} 3^{-p}\left[b_{i, k_2+p}, h_{\tau i, k_1+r-p}\right].
        \end{equation}
    \end{itemize}
    Here we have adopted the convention that 
    \begin{equation}\label{eq:vanishing}
        \mbox{$h_{i,r}=0$ if $r<-\langle \alpha_i,\mu\rangle-1$ and $h_{i,r}=\zeta_i$ if $r=-\langle \alpha_i,\mu\rangle-1$}
    \end{equation}
\end{definition}
Note that the right-hand side of \cref{eq:iSerre} is a finite sum by the convention \cref{eq:vanishing}. We define the following generating functions
\begin{align}
    \label{generatingfunction}
    h_i(z) & = \hbar\sum_{r\in \mathbb{Z}}h_{i,r}z^{-r-1},&
    h_i^\circ(z) & = \hbar\sum_{r\geq 0}h_{i,r}z^{-r-1},&
    b_i(z) & = \hbar\sum_{s\geq 0}b_{i,s}z^{-s-1}.
\end{align}
Following \cite[\S 3.3]{LZ24}, we can rewrite the relations \eqref{eq:hcomm}, \eqref{eq:hbcomm} and \eqref{eq:bcomm} as 
\begin{align}
    \label{eq:hcomm(z)}&
    {\left[h_i(u), h_j(v)\right]=0, \qquad h_{\tau i}(u)=h_i(-u),}\\
    \label{eq:hbcomm(z)}
    &
    \left(u^2-v^2\right)\left[h_i(u), b_j(v)\right]=\frac{c_{i j}-c_{\tau i, j}}{2} \hbar u\left\{h_i(u), b_j(v)\right\}\\
    \notag& \qquad +\frac{c_{i j}+c_{\tau i, j}}{2} \hbar v\left\{h_i(u), b_j(v)\right\} +\frac{c_{i j} c_{\tau i, j}}{4} \hbar^2\left[h_i(u), b_j(v)\right]
    \\\notag&\qquad 
    -\hbar\left[h_i(u), b_{j, 1}\right] 
    -\hbar v\left[h_i(u), b_{j, 0}\right]-\frac{c_{i j}+c_{\tau i, j}}{2} \hbar^2\left\{h_i(u), b_{j, 0}\right\},\\
    \label{eq:bcomm(z)}&
    (u-v)\left[b_i(u), b_j(v)\right]=\frac{c_{i j}}{2} \hbar\left\{b_i(u), b_j(v)\right\}+\hbar\left(\left[b_{i, 0}, b_j(v)\right]-\left[b_i(u), b_{j, 0}\right]\right) \\
    \notag&\qquad -\delta_{\tau i, j} \hbar\left(\frac{2 u}{u+v} h_i^{\circ}(u)+\frac{2 v}{u+v} h_j^{\circ}(v)\right).
\end{align}
We can also write two of Serre relations \eqref{eq:commSerre} and \eqref{eq:usualSerre} as 
\begin{align}
    \label{eq:commSerre(z)}&
    (u+v)\left[b_i(u), b_j(v)\right]=\delta_{\tau i, j}\hbar\left(h_j^\circ(v)-h_i^\circ(u)\right)&(c_{ij}=0),\\
    \label{eq:usualSerre(z)}&
    \operatorname{Sym}_{u_1, u_2}\left[b_i(u_1),\left[b_i(u_2), b_j(v)\right]\right]=0, &
    (c_{i j}=-1, j \neq \tau i \neq i).
\end{align}

\section{Main Results}
In this section, we first produce a GKLO type representation of the shifted twisted Yangian via difference operators. Then we prove our main result showing that for a quiver with involution, there is an algebra homomorphism from the shifted twisted Yangian to the quantized Coulomb branch algebra associated to the corresponding involution-fixed part of the quiver gauge theory.

Let $(Q,\tau)$ be a quiver with involution as in Section \ref{sec:quiverinv}, then there is a symmetric Cartan matrix with involution $(C,\tau)$ by forgetting the orientation of $Q$, i.e.
\[
    I=Q_0,\qquad \tau=\tau|_{Q_0},\qquad c_{ij}=-\texttt{\#}\left\{h\in Q_1:\{s(h),t(h)\}=\{i,j\}\right\}\quad \textit{ for } i\neq j.
\]
Notice that the further assumptions on the quiver are equivalent to the assumptions on the Cartan matrix in \cref{sec:Yangian}. Let $\mathfrak{g}_Q$ denote the corresponding Kac--Moody algebra, with $\Lambda_i$ (resp. $\alpha_i^\vee$) being the fundamental coweights (resp. simple coroots). Fix dimension vectors $(v_i)_{i\in Q_0}$ and $(w_i)_{i\in Q_0}$, such that $v_i=v_{\tau i}$. Then we have the quantized Coulomb branch algebra
\[
    \mathcal{A}_\hbar := H^{(G_{\calO}\times F_{\calO})\rtimes\mathbb{C}_\hbar^\times}_*(\calR_{G,\bfN})
\]
associated to the following data as in Section \ref{sec:mono}:
\[
    G:=G_V^{\tau} \simeq \prod_{i\in Q_0^+}GL(V_i),\qquad \bfN = E_V^\tau \oplus L_{W,V},\qquad F=G_W.
\]
By \cref{eq:embeddiff}, we have the embedding 
\begin{equation}\label{equ:embedcou}
    \varphi:\mathcal{A}_\hbar\hookrightarrow \Diff_\hbar(T_V^\tau)\otimes H_{G_W}^*(\pt).
\end{equation}

Let 
\[
    \mu=\sum_{j\in Q_0} (w_j+w_{\tau j})\Lambda_j+\sum_{h\in Q_1^\tau}(\Lambda_{s(h)}+\Lambda_{t(h)})-\sum_{j\in Q_0}v_j\alpha_j^\vee\in P^\vee,
\]
which is clearly $\tau$-invariant. For $i\in Q_0$, we define the following elements in $\Diff_\hbar(T_V^\tau)\otimes H_{G_W}^*(\pt)$:
\begin{align}\label{equ:Biz}
    B_i(u)&=\sum_{r=1}^{v_i}\frac{1}{-u-x_{i,r}-\tfrac{\hbar}{2}} \prod_{\substack{h\in Q_1^\tau \\ s(h)=i}}(2x_{i,r}+\tfrac{3\hbar}{2})V_{\tau i}(x_{i,r}+\tfrac{\hbar}{2})\\
    &\mspace{21mu}\cdot\prod_{\substack{h\notin Q_1^\tau \\ s(h)=i}}V_{t(h)}(x_{i,r}+\tfrac{\hbar}{2})\frac{W_{\tau i}(-x_{i,r}-\tfrac{\hbar}{2})}{V_{i,r}(x_{i,r})}d_{i,r}\notag\\
    &=\sum_{r=1}^{v_i}\frac{1}{-u-x_{i,r}-\tfrac{\hbar}{2}}\prod_{\substack{h\in Q_1^\tau \\ s(h)=i}}(2x_{i,r}+\tfrac{3\hbar}{2}) \prod_{\substack{h\in Q_1 \\ s(h)=i}}V_{t(h)}(x_{i,r}+\tfrac{\hbar}{2})\frac{W_{\tau i}(-x_{i,r}-\tfrac{\hbar}{2})}{V_{i,r}(x_{i,r})}d_{i,r},
\end{align}
and
\begin{equation}\label{equ:Hu}
    \begin{aligned}
        H_i(u) &:= (-1)^{v_i-1}\bigg(\frac{2u}{(2u-\frac{\hbar}{2})(2u+\frac{\hbar}{2})}\bigg)^{c_{i,\tau i}}(-1)^{\delta_{i\to \tau(i)}}\\
        &\mspace{24mu}\cdot\frac{W_i(-u)W_{\tau i}(u)}{V_i(-u+\frac{\hbar}{2})V_i(-u-\tfrac{\hbar}{2})}\prod_{\substack{h\in Q_1 \\ s(h)=i}}V_{t(h)}(-u)\prod_{\substack{h\in Q_1 \\ s(h)=\tau i}}V_{t(h)}(u),
    \end{aligned}
\end{equation}
where $\delta_{i\to \tau(i)}$ is $1$ if there is an $h\in Q_1$ such that $s(h)=i$ and $t(h)=\tau(i)$, and is $0$ otherwise.

Expanding $B_i(u)$ and $H_i(u)$ into a Laurent series in $u^{-1}$, we let $B_{i,r}$ (resp. $H_{i,r}$) denote the coefficient of $u^{-r-1}$ of $B_i(u)$ (resp. $H_i(u)$). 
\begin{lemma}\label{lem:Hiz}
    The following holds:
    \begin{enumerate}
        \item $H_{\tau i}(u) = H_i(-u)$.
        \item $H_{i,r}\in H_{G_V^\tau\times G_W\times \mathbb{C}^*_\hbar}^*(\pt)$
        \item $H_{i,r}=0$ if $r<-\langle \alpha_i,\mu\rangle -1$, and
        \[
            H_{i,-\langle \alpha_i,\mu\rangle -1}=2^{-c_{i,\tau i}}(-1)^{v_i-1+\delta_{i\rightarrow \tau i}+w_i+\sum_{h\in Q_1, s(h)=i}v_{t(h)}}.
        \]
    \end{enumerate}
\end{lemma}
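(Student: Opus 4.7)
The plan is to verify each of the three claims by direct manipulation of the closed formula \eqref{equ:Hu} for $H_i(u)$. The main inputs are the identifications $v_{\tau i} = v_i$ and $x_{\tau i,k} = -x_{i,k}$, which yield $V_{\tau i}(z) = (-1)^{v_i} V_i(-z)$, together with the observation that, under the simply-laced hypothesis, every $\tau$-fixed edge runs between $i$ and $\tau i$ for some $i$ and is the unique edge between those two vertices.

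For part (1), I would substitute $u \mapsto -u$ in $H_i(u)$ and compare factor by factor with $H_{\tau i}(u)$. The factors $W_i(u)W_{\tau i}(-u)$, the denominator $V_i(u+\tfrac{\hbar}{2})V_i(u-\tfrac{\hbar}{2})$, and the two edge products all match their $\tau i$-counterparts via $v_{\tau i}=v_i$ and the identity $V_{\tau i}(-u \pm \tfrac{\hbar}{2}) = (-1)^{v_i} V_i(u \mp \tfrac{\hbar}{2})$ (the two sign contributions from the two denominator factors cancel). The only genuine sign check concerns the prefactor: under $u\mapsto -u$, the factor $\bigl(\tfrac{2u}{(2u-\hbar/2)(2u+\hbar/2)}\bigr)^{c_{i,\tau i}}$ picks up $(-1)^{c_{i,\tau i}}$, which must combine with the difference between $(-1)^{\delta_{i\to \tau i}}$ and $(-1)^{\delta_{\tau i \to i}}$. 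If $c_{i,\tau i}=0$ both $\delta$'s vanish; if $c_{i,\tau i}=-1$, the simply-laced hypothesis forces $\delta_{i\to\tau i}+\delta_{\tau i\to i}=1$, so $c_{i,\tau i}+\delta_{i\to\tau i}-\delta_{\tau i\to i}$ is always even and the signs balance.

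For part (2), I would expand $H_i(u)$ as a Laurent series in $u^{-1}$. The coefficients of the polynomial factors $V_j(z)$ and $W_j(z)$ are elementary symmetric polynomials in $x_{j,*}$ and $w_{j,*}$, hence invariant under the Weyl group $\prod_{i\in Q_0^+} S_{v_i}\times \prod_i S_{w_i}$ of $G_V^\tau \times G_W$. The denominator $V_i(-u+\tfrac{\hbar}{2})V_i(-u-\tfrac{\hbar}{2})$ is a polynomial in $u$ of leading coefficient $(-1)^{2v_i}=1$, so its reciprocal is an honest power series in $u^{-1}$ with polynomial coefficients. When $c_{i,\tau i}=-1$, the remaining factor $\tfrac{(2u-\hbar/2)(2u+\hbar/2)}{2u}$ is a Laurent polynomial in $u$ over $\mathbb{Q}[\hbar]$. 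Multiplying these series gives Laurent coefficients in $H^*_{G_V^\tau \times G_W \times \mathbb{C}^*_\hbar}(\pt)$.

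Part (3) carries the main computation. Summing the $u$-degrees of the factors in $H_i(u)$ gives
\[
  \deg_u H_i(u) = -c_{i,\tau i} + w_i + w_{\tau i} - 2v_i + \sum_{h:\, s(h)=i} v_{t(h)} + \sum_{h:\, s(h)=\tau i} v_{t(h)}.
\]
To match this with $\langle \alpha_i,\mu\rangle$, I would use $\langle \alpha_i,\Lambda_j\rangle = \delta_{ij}$, $\langle \alpha_i,\alpha_j^\vee\rangle = c_{ij}$, together with two elementary identities: all $\tau$-fixed edges run between $i$ and $\tau i$, giving $\sum_{h\in Q_1^\tau}(\delta_{s(h),i}+\delta_{t(h),i}) = -c_{i,\tau i}$; and the bijection $h\mapsto \tau h$ converts $\sum_{t(h)=i} v_{s(h)}$ into $\sum_{s(h)=\tau i} v_{t(h)}$. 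Combined with $\sum_j v_j c_{ij} = 2v_i - \sum_{s(h)=i} v_{t(h)} - \sum_{t(h)=i} v_{s(h)}$, these yield $\deg_u H_i(u) = \langle \alpha_i,\mu\rangle$. Since $H_i(u) = \sum_r H_{i,r} u^{-r-1}$, the vanishing claim $H_{i,r}=0$ for $r < -\langle \alpha_i,\mu\rangle - 1$ follows immediately. The leading $u$-coefficient, which is $H_{i,-\langle\alpha_i,\mu\rangle-1}$, is computed by multiplying leading coefficients of all factors: the numerical contribution is $2^{-c_{i,\tau i}}$ and the signs combine to $(-1)^{v_i-1+\delta_{i\to\tau i}+w_i+\sum_{s(h)=i} v_{t(h)}}$, where the two $(-1)^{v_i}$ contributions from inverting the denominators cancel. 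The only real obstacle throughout is careful sign bookkeeping, especially in the prefactor analysis of part (1) and in the leading-coefficient computation of part (3).
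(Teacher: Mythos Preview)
Your proposal is correct and follows essentially the same approach as the paper's own proof, which is simply a terser version of what you wrote: the paper invokes $x_{i,j}=-x_{\tau i,j}$ for part (1), the Weyl-invariance of the coefficients of $V_i$ and $W_i$ for part (2), and then records exactly your degree formula $-c_{i,\tau i}+w_i+w_{\tau i}-2v_i+\sum_{s(h)=i}v_{t(h)}+\sum_{s(h)=\tau i}v_{t(h)}=\langle\alpha_i,\mu\rangle$ for part (3), leaving the leading-coefficient sign bookkeeping implicit. Your more careful sign analysis in parts (1) and (3) fills in details the paper omits.
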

\begin{proof}
    The first one follows from the fact $x_{i,j}=-x_{\tau i, j}$, while the second one follows from the fact that the coefficients of $V_i(u)$ (resp. $W_i(u)$) are in $H_{G_V^\tau}^*(\pt)$ (resp. $H_{G_W}^*(\pt)$). By definition, the highest degree of $u$ in $H_i(u)$ is
    \[-c_{i,\tau i}+w_i+w_{\tau i}-2v_i+\sum_{\substack{h\in Q_1 \\ s(h)=i}}v_{t(h)}+\sum_{\substack{h\in Q_1 \\ s(h)=\tau i}}v_{t(h)}=\langle \alpha_i,\mu\rangle.\]
    This concludes the lemma.
\end{proof}

From now on, we fix the parameters 
\[
    \zeta_i:=\hbar^{-1}2^{-c_{i,\tau i}}(-1)^{v_i-1+\delta_{i\rightarrow \tau i}+w_i+\sum_{h\in Q_1, s(h)=i}v_{t(h)}},
\]
and consider the associated twisted shifted Yangian $\mathbf{Y}^{\tau}_{\mu}(\mathfrak{g}_Q)$. Then we have the following GKLO type representation for the shifted twisted Yangian, generalizing the results from \cite{GKLO05,KWWY14,BFN19}.
\begin{theorem}\label{thm:gklo}
    There exists a unique $H_{G_W}^*(\pt)(\hbar)$-algebra homomorphism 
    \begin{equation}\label{gklo}
        \psi\colon\mathbf{Y}^{\tau}_{\mu}(\mathfrak{g}_Q)\otimes H_{G_W}^*(\pt)\longrightarrow \Diff_\hbar(T_V^\tau)\otimes H_{G_W}^*(\pt)
    \end{equation}
    sending $h_i(z)$ to $H_i(z)$ and $b_i(z)$ to $B_i(z)$.
\end{theorem}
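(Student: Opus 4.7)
The plan is to verify every defining relation of \cref{shiftedTY} directly for the explicit difference operators $H_i(u)$ and $B_i(u)$ given in \cref{equ:Hu} and \cref{equ:Biz}; uniqueness of $\psi$ is then automatic since the $h_{i,r}$'s and $b_{i,s}$'s generate $\mathbf{Y}^{\tau}_{\mu}(\mathfrak{g}_Q)$. The approach closely parallels the argument of \cite{SSX25} in the wedge-power case, and the main bookkeeping issue is tracking the new factors that distinguish the symmetric-power case, namely the $(2x_{i,r}+\tfrac{3\hbar}{2})$-factor in $B_i(u)$, the factor $\bigl(\tfrac{2u}{(2u-\hbar/2)(2u+\hbar/2)}\bigr)^{c_{i,\tau i}}$, and the sign $(-1)^{\delta_{i\to\tau(i)}}$ in $H_i(u)$.

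I would dispose of the easy relations first. The Cartan relations \cref{eq:hcomm(z)} are immediate: $[H_i(u),H_j(v)]=0$ holds because each $H_i(u)$ lies in the commutative subalgebra $H^*_{G_V^\tau\times G_W\times\mathbb{C}^\times_\hbar}(\pt)\otimes\mathbb{C}(u)$ by \cref{lem:Hiz}(2), the reflection identity $H_{\tau i}(u)=H_i(-u)$ is \cref{lem:Hiz}(1), and the vanishing/normalization prescription of \cref{eq:vanishing} is precisely \cref{lem:Hiz}(3), which motivates the specific choice of $\zeta_i$ made just before the theorem. For the mixed relation \cref{eq:hbcomm(z)}, I would apply the shift rule $d_{i_1,j_1}x_{i_2,j_2}=(x_{i_2,j_2}+(\delta_{i_1,i_2}-\delta_{i_1,\tau i_2})\delta_{j_1,j_2}\hbar)d_{i_1,j_1}$ to rewrite $[H_i(u),B_j(v)]$ term by term as a sum of $\bigl(H_i(u)-H_i(u\pm\tfrac{\hbar}{2})\bigr)$-type differences evaluated at the poles of $B_j(v)$; this reduces \cref{eq:hbcomm(z)} to a rational-function identity in $(u,v)$ that can be checked pole-by-pole, the polynomial $u^2-v^2$ on the left being present precisely to clear the poles at $v=-x_{j,r}-\tfrac{\hbar}{2}$.

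The central step is \cref{eq:bcomm(z)}. Multiplying \cref{equ:Biz} for $B_i(u)$ and $B_j(v)$, pulling all shift operators to the right, and decomposing the result in partial fractions in $u$ and $v$, one isolates two kinds of contributions: (a) contributions from the simple poles at $u=-x_{i,r}-\tfrac{\hbar}{2}$ and $v=-x_{j,s}-\tfrac{\hbar}{2}$, which must recombine into the bilinear term $\tfrac{c_{ij}}{2}\hbar\{b_i(u),b_j(v)\}$ plus the $b_0$-correction $\hbar\bigl([b_{i,0},b_j(v)]-[b_i(u),b_{j,0}]\bigr)$; and (b) when $j=\tau i$, the additional pole at $u+v=0$ arising because two shift operators $d_{i,r}$ and $d_{\tau i,s}=d_{i,s}^{-1}$ meet, which must reproduce $-\delta_{\tau i,j}\hbar\bigl(\tfrac{2u}{u+v}h_i^\circ(u)+\tfrac{2v}{u+v}h_j^\circ(v)\bigr)$ exactly. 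Matching the residue in case (b) against the prescribed formula for $H_i(u)$ is where the new factors $(2x_{i,r}+\tfrac{3\hbar}{2})$ and $\bigl(\tfrac{2u}{(2u\mp\hbar/2)}\bigr)^{c_{i,\tau i}}$ have been engineered to fit, and I expect this residue matching to be the main obstacle in the proof.

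Once \cref{eq:bcomm(z)} is established, the commuting Serre relation \cref{eq:commSerre(z)} follows as the $c_{ij}=0$ specialization, and the standard Serre relation \cref{eq:usualSerre(z)} (for $c_{ij}=-1$, $j\neq\tau i\neq i$) reduces to the vanishing of an appropriate total residue in the variables $u_1,u_2,v$ in essentially the same way as \cite[\S 3]{SSX25}, with no new phenomena contributed by the symmetric power since the edges between $i$, $\tau i$ and $j$ are not $\tau$-fixed in this range. The iSerre relation \cref{eq:iSerre}, which is non-trivial only when $c_{i,\tau i}=-1$, is the most delicate remaining piece; I would verify it from the generating-function form in \cite{LZ24} by evaluating both sides on the poles of $B_i(u_1)B_i(u_2)B_{\tau i}(v)$ and checking that the $\tfrac{4}{3}$-prefactor on the right is produced precisely by the combination of the additional $\bigl(\tfrac{2u}{(2u-\hbar/2)(2u+\hbar/2)}\bigr)^{-1}$-factor in $H_i(u)$ and the $(2x+\tfrac{3\hbar}{2})$-factor in $B_i$, in a manner analogous to but distinct from the wedge-power computation of \cite{SSX25}.
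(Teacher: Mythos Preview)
Your overall strategy---verify each defining relation by direct computation with the explicit operators---is exactly the paper's approach, and the broad shape of the argument is right. However, several steps in your sketch would not go through as written.

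First, your treatment of \cref{eq:hbcomm(z)} is off. The shift operators $d_{j,s}$ inside $B_j(v)$ act on the variables $x_{j,s}$, not on the spectral parameter $u$; commuting $y_{j,s}$ past $H_i(u)$ therefore produces a \emph{multiplicative} factor $D_{i,j,s}(u)$ (computed in \cref{lem:yH}) rather than an additive shift $H_i(u\pm\tfrac{\hbar}{2})$. The paper writes $D_{i,j,s}(u)$ in the closed form \cref{equ:Dijs} and then checks that the coefficient of $H_i(u)y_{j,s}$ in \cref{equ:hibj} vanishes as a rational-function identity in $(u,v)$.

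Second, \cref{eq:commSerre(z)} is \emph{not} the $c_{ij}=0$ specialization of \cref{eq:bcomm(z)}: the former has $(u+v)$ on the left and the $h^\circ$-terms enter with opposite signs, so it is a genuinely stronger relation that must be checked separately. The paper handles it in \S\ref{sec:BBijc=0} via \cref{cor:cequal0H}, which expresses $\hbar(H_i(u))^\circ$ explicitly as a sum over $r$ of terms $y_{\tau i,r}y_{i,r}$ and $y_{i,r}y_{\tau i,r}$; this is precisely the residue computation you anticipate for case~(b) of \cref{eq:bcomm(z)}, but it is reused here in an essential way.

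Third, for the $\imath$Serre relation \cref{eq:iSerre} the paper does not work with a three-variable generating series at all. It invokes \cref{prop:finite=>affine} (taken from \cite{SSX25}) to reduce the entire infinite family to the single case $k_1=k_2=r=0$, which has the one-variable generating form \cref{equ:iSerreitaui}. Checking that identity uses not only \cref{cor:cequal0H} but also the special evaluation \cref{lem:Hiat3h/2} of $H_i(-x_{i,r}-\tfrac{3\hbar}{2})$ and the cubic Serre-type lemmas \cref{lem:yserre1,lem:yserre2} for the $y_{i,r}$'s. Without the reduction step you would be facing an infinite family of identities of growing complexity, and your sketch gives no mechanism to collapse them.
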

\begin{remark}
    The GKLO type representations for the quantum symmetric pairs are also considered in \cite{bartlett2025gklo,LWW,SSX25}.
\end{remark}

To prove the theorem, we need to check that the operators $B_i(z)$ and $H_i(z)$ satisfy all the relations in $\mathbf{Y}^{\tau}_{\mu}(\mathfrak{g}_Q)\otimes H_{G_W}^*(\pt)$. We show this in \cref{sec:relations} below.

Recall the embedding \cref{equ:embedcou}. As an immediate consequence of the above theorem, we get our main result. 
\begin{theorem}\label{thm:main}
    The algebra homomorphism $\psi$ in \cref{thm:gklo} factors through a unique $H_{G_W}^*(\pt)[\hbar,\hbar^{-1}]$-algebra homomorphism $\Psi$:
    \[
        \xymatrix{\mathbf{Y}^{\tau}_{\mu}(\mathfrak{g}_Q)\otimes H_{G_W}^*(\pt) \ar[rr]^-\Psi \ar[rd]^-\psi & &\mathcal{A}_\hbar[\hbar^{-1}] \ar@{^{(}->}[ld]^-{z^*\circ(\iota_*)^{-1}}\\
        &\Diff_\hbar(T_V^\tau)\otimes H_{G_W}^*(\pt),}
    \]
    such that for any $i\in Q_0$, $\Psi(h_{i,r})=\hbar^{-1}H_{i,r}$; for $i\in Q_0^+$,
    \[
        \Psi(b_{i,r})= \hbar^{-1}(-1)^{1+\sum_{h\notin Q_1^\tau,s(h)=i} v_{t(h)}+\sum_{h\in Q_1^\tau,s(h)=i}(v_i-1)} (-c_1(\mathcal{Q}_i)-\tfrac{\hbar}{2})^r\cap [R_{\epsilon_{i,1}}],
    \]
    and for $i\in Q_0^-$,
    \[
        \Psi(b_{i,r})= \hbar^{-1}(-1)^{1+\sum_{h\notin Q_1^\tau,s(h)=i} v_{t(h)}+\sum_{h\in Q_1^\tau,s(h)=i}(v_i-1)} (c_1(\mathcal{S}_{\tau i})+\tfrac{\hbar}{2})^r\cap [R_{-\epsilon_{\tau i,v_i}}].
    \]
\end{theorem}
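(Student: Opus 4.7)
The plan is to leverage \cref{thm:gklo}: given the homomorphism $\psi$, to produce $\Psi$ I need only to show that $\psi$ lands inside the image of the embedding $\varphi \colon \mathcal{A}_\hbar[\hbar^{-1}] \hookrightarrow \Diff_\hbar(T_V^\tau) \otimes H_{G_W}^*(\pt)[\hbar^{-1}]$. Since $\varphi$ is injective, such a factorization will then exist and be unique. Equivalently, I would exhibit, for each generator of $\mathbf{Y}^{\tau}_{\mu}(\mathfrak{g}_Q) \otimes H_{G_W}^*(\pt)$, a preimage in $\mathcal{A}_\hbar[\hbar^{-1}]$; the action on $H_{G_W}^*(\pt)$ is tautological, so the content is concentrated in the Cartan generators $h_{i,r}$ and the off-diagonal generators $b_{i,r}$.

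For the Cartan part, inspecting \cref{equ:Hu} and expanding in $u^{-1}$ reveals that every coefficient $H_{i,r}$ is a polynomial in the variables $x_{i,k}$, $w_{i,k}$, and $\hbar$, hence lies in $H^*_{G_V^\tau \times G_W \times \mathbb{C}^\times_\hbar}(\pt)$. This polynomial algebra sits inside $\mathcal{A}_\hbar$ as the equivariant Borel--Moore homology of the zero stratum $\calR_0 \cong \bfN_\calO$, and $\varphi$ restricts to the identity on it. Consequently, the assignment $\Psi(h_{i,r}) := \hbar^{-1} H_{i,r}$ defines a valid element of $\mathcal{A}_\hbar[\hbar^{-1}]$ whose image under $\varphi$ recovers $\psi(h_{i,r})$.

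For the off-diagonal part with $i \in Q_0^+$, I would compare $B_i(u)$ from \cref{equ:Biz} directly with the dressed monopole formula of \cref{prop:fmono} applied to $f(x) = (-x - \tfrac{\hbar}{2})^r$. Using the Laurent expansion
\[
\frac{1}{-u - x_{i,s} - \tfrac{\hbar}{2}} = -\sum_{k \geq 0} (-1)^k \Bigl(x_{i,s}+\tfrac{\hbar}{2}\Bigr)^k u^{-k-1},
\]
the coefficient $B_{i,r}$ matches $\varphi\!\bigl((-c_1(\mathcal{Q}_i) - \tfrac{\hbar}{2})^r \cap [\calR_{\epsilon_{i,1}}]\bigr)$ summand by summand, up to an overall sign that absorbs: (i) the discrepancy between the $(-1)^{r+1}$ produced by the expansion and the $(-1)^r$ coming from $f(x_{i,s})$, (ii) a factor $(-1)^{v_{t(h)}}$ for each $h \in Q_1 \setminus Q_1^\tau$ with $s(h)=i$, and (iii) a factor $(-1)^{v_i-1}$ for each $h \in Q_1^\tau$ with $s(h)=i$. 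Collecting these signs yields exactly the exponent $1 + \sum_{h \notin Q_1^\tau, s(h)=i} v_{t(h)} + \sum_{h \in Q_1^\tau, s(h)=i}(v_i - 1)$ appearing in the statement. For $i \in Q_0^-$, the same bookkeeping, combined with the conventions $x_{i,r} = -x_{\tau i, r}$ and $d_{i,r} = d_{\tau i, r}^{-1}$, reads $B_i(u)$ as the generating series of dressed monopole operators attached to the minuscule coweight $-\epsilon_{\tau i, v_i}$, and \cref{prop:fmono2} supplies the matching identity.

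The hard part is essentially absorbed by \cref{thm:gklo}, which discharges the verification of all defining relations \cref{eq:hcomm,eq:hbcomm,eq:bcomm,eq:commSerre,eq:usualSerre,eq:iSerre} of the shifted twisted Yangian. What remains is only the bookkeeping of signs and the identification of specific elements in $\mathcal{A}_\hbar[\hbar^{-1}]$. The one genuinely new subtlety relative to \cite{SSX25} is the appearance of the factor $(2x_{i,r} + \tfrac{3\hbar}{2})$ rather than $(2x_{i,r} + \tfrac{\hbar}{2})$ on the $\tau$-fixed edges; this is the characteristic signature of the second symmetric-power case, and it has already been built into both \cref{prop:fmono,prop:fmono2} and \cref{equ:Biz}, so the two sides match without further adjustment.
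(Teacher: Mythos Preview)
Your proposal is correct and follows exactly the same approach as the paper: the paper's proof is the single line ``This follows from \cref{prop:fmono} and \cref{prop:fmono2},'' and you have simply unpacked what that entails, namely matching the coefficients $B_{i,r}$ of \cref{equ:Biz} against the dressed monopole formulas (tracking the signs $(-1)^{v_{t(h)}}$ and $(-1)^{v_i-1}$) and noting that the $H_{i,r}$ already lie in the Gelfand--Tsetlin subalgebra by \cref{lem:Hiz}(2). Your sign bookkeeping for $i\in Q_0^+$ is accurate, and the $i\in Q_0^-$ case is handled the same way via \cref{prop:fmono2} as you indicate.
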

\begin{proof}
    This follows from \cref{prop:fmono} and \cref{prop:fmono2}.
\end{proof}
\begin{remark}
    By \cref{lem:Hiz}(2), the image of $h_{i,r}$ lies in the Gelfand--Tsetlin subalgebra $H_{G_V^\tau\times G_W\times \mathbb{C}^*_\hbar}^*(\pt)$ of $\mathcal{A}_\hbar[\hbar^{-1}]$.  
\end{remark}

\section{Preparations for the proof of relations}\label{sec:preparations}

In this section, we prepare some notations and lemmas that will be used in the verification of relations in the following sections.

\subsection{Notations}

Let us introduce some notations. For $i\in Q_0$ and $1\leq r\leq v_i$, let
\begin{align*}
    y_{i,r}:=&\prod_{\substack{h\in Q_1^\tau \\ s(h)=i}}(2x_{i,r}+\tfrac{3\hbar}{2})V_{\tau i}(x_{i,r}+\tfrac{\hbar}{2})\prod_{\substack{h\notin Q_1^\tau \\ s(h)=i}}V_{t(h)}(x_{i,r}+\tfrac{\hbar}{2})\frac{W_{\tau i}(-x_{i,r}-\tfrac{\hbar}{2})}{V_{i,r}(x_{i,r})}d_{i,r}\\
    =&\prod_{\substack{h\in Q_1^\tau \\ s(h)=i}}(2x_{i,r}+\tfrac{3\hbar}{2})\prod_{\substack{h\in Q_1 \\ s(h)=i}}V_{t(h)}(x_{i,r}+\tfrac{\hbar}{2})\frac{W_{\tau i}(-x_{i,r}-\tfrac{\hbar}{2})}{V_{i,r}(x_{i,r})}d_{i,r}.
\end{align*}
Then
\begin{align*}
    y_{\tau i,r}=&\prod_{\substack{h\in Q_1^\tau \\ t(h)=i}}(-2x_{i,r}+\tfrac{3\hbar}{2})V_{i}(-x_{i,r}+\tfrac{\hbar}{2})\prod_{\substack{h\notin Q_1^\tau \\ s(h)=\tau i}}V_{t(h)}(-x_{i,r}+\tfrac{\hbar}{2})\frac{W_i(x_{i,r}-\tfrac{\hbar}{2})}{(-1)^{v_i-1}V_{i,r}(x_{i,r})}d_{i,r}^{-1}\\
    =&\prod_{\substack{h\in Q_1^\tau \\ t(h)=i}}(-2x_{i,r}+\tfrac{3\hbar}{2})\prod_{\substack{h\in Q_1 \\ s(h)=\tau i}}V_{t(h)}(-x_{i,r}+\tfrac{\hbar}{2})\frac{W_i(x_{i,r}-\tfrac{\hbar}{2})}{(-1)^{v_i-1}V_{i,r}(x_{i,r})}d_{i,r}^{-1},
\end{align*}
and
\[
    B_i(z) =\sum_{r=1}^{v_i}\frac{1}{-z-x_{i,r}-\tfrac{\hbar}{2}} y_{i,r}.
\]

We first prove the following two lemmas about their commutation relations.

\begin{lemma}\label{lem:yy}
    For $ i,j\in Q_0 $ and $ 1\leq r\leq v_i, 1\leq s\leq v_j $,
    \begin{equation*}
        y_{j,s}y_{i,r}=C_{i,j,r,s}y_{i,r}y_{j,s},
    \end{equation*}
    where
    \begin{equation*}
        C_{i,j,r,s}=\begin{cases}
            1, & j=i,s=r \\
            \frac{x_{i,r}-x_{i,s}+\hbar}{x_{i,r}-x_{i,s}-\hbar}, & j=i,s\neq r \\
            \text{no simple expressions}, &j=\tau i,s=r\\
            \left( \frac{x_{i,r}+x_{i,s}-\frac{\hbar}{2}}{x_{i,r}+x_{i,s}+\frac{\hbar}{2}} \right) ^{-c_{i,\tau i}}, & j=\tau i,s\neq r \\
            \left( \frac{x_{i,r}-x_{j,s}-\frac{\hbar}{2}}{x_{i,r}-x_{j,s}+\frac{\hbar}{2}} \right) ^{-c_{ij}}, & j\notin\{i,\tau i\}
        \end{cases}
    \end{equation*}
\end{lemma}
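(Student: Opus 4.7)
The plan is to write each $y_{i,r}$ in the canonical form $y_{i,r}=P_{i,r}\,d_{i,r}$, where $P_{i,r}\in\mathbb{C}(\mathfrak{t}\times\mathbb{A}^1)$ is the rational prefactor (the polynomial numerator divided by $V_{i,r}(x_{i,r})$). Since $d_{i,r}$ and $d_{j,s}$ commute inside $\Diff_\hbar(T_V^\tau)$, the proposed relation $y_{j,s}y_{i,r}=C_{i,j,r,s}\,y_{i,r}y_{j,s}$ is equivalent, after moving the two shifts to the right, to the rational-function identity
\[
C_{i,j,r,s}=\frac{P_{j,s}\,\sigma_{j,s}(P_{i,r})}{P_{i,r}\,\sigma_{i,r}(P_{j,s})},
\]
where $\sigma_{i,r}$ denotes conjugation by $d_{i,r}$, sending $x_{i,r}\mapsto x_{i,r}+\hbar$ and $x_{\tau i,r}\mapsto x_{\tau i,r}-\hbar$ while fixing every other torus variable. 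With this reformulation the case $j=i$, $s=r$ is immediate.

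For the remaining cases the strategy is to enumerate the linear factors in $P_{i,r}$ whose variables are actually moved by $\sigma_{j,s}$, and symmetrically for $P_{j,s}$ under $\sigma_{i,r}$. Under our standing assumptions (simply laced, no self-loops, no $\tau$-fixed vertex), these factors come only from three sources: the denominator $V_{i,r}(x_{i,r})$, the polynomials $V_{t(h)}(x_{i,r}+\tfrac{\hbar}{2})$ for arrows $h$ with $s(h)=i$, and the scalar $(2x_{i,r}+\tfrac{3\hbar}{2})$ present whenever a $\tau$-fixed arrow emanates from $i$. In each subcase most of the potentially contributing factors pair off symmetrically between $P_{i,r}$ and $P_{j,s}$ (and between their shifts) and cancel in the ratio; what survives produces the claimed formula. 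Concretely: for $j=i$, $s\neq r$ only the denominators $V_{i,r}$ and $V_{i,s}$ survive, giving $(x_{i,r}-x_{i,s}+\hbar)/(x_{i,r}-x_{i,s}-\hbar)$; for $j\notin\{i,\tau i\}$ the contributions from $V_{\tau j}$ and $V_{\tau i}$ cancel against each other via the $\tau$-invariance identity $n_{i,\tau j}=n_{j,\tau i}$ for edge counts (where $n_{ab}$ denotes the number of arrows from $a$ to $b$), while the $V_j$ and $V_i$ contributions combine into the stated $-c_{ij}$th power; and for $j=\tau i$, $s\neq r$ the only survivor is $(x_{i,r}+x_{i,s}+\tfrac{\hbar}{2})^{-c_{i,\tau i}}$ inside $V_{\tau i}(x_{i,r}+\tfrac{\hbar}{2})$, and the relation $\sigma_{\tau i,s}=\sigma_{i,s}^{-1}$ yields the claimed power. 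In every subcase the $(2x+\tfrac{3\hbar}{2})$-scalars and the $W$-factors depend on only one of the pairs $(x_{i,r},x_{\tau i,r})$ or $(x_{j,s},x_{\tau j,s})$, so they pass through the other shift untouched and contribute trivially.

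The main complication is the case $j=\tau i$ and $s=r$. Here $d_{\tau i,r}=d_{i,r}^{-1}$, so the shift operators in both $y_{i,r}y_{\tau i,r}$ and $y_{\tau i,r}y_{i,r}$ cancel identically; both products are honest rational functions in $\mathbb{C}(\mathfrak{t}\times\mathbb{A}^1)$, and the ratio of the two does not simplify to a compact closed form because the $W$-factors at $\pm(x_{i,r}+\tfrac{\hbar}{2})$ and the $V_{t(h)}$-factors at both $x_{i,r}+\tfrac{\hbar}{2}$ and $-x_{i,r}+\tfrac{\hbar}{2}$ all contribute nontrivially to the difference. This is exactly what the ``no simple expressions'' entry of the table records; the proof will just leave $C_{i,\tau i,r,r}$ in the unreduced form given by the displayed formula above, as only specific features of its pole and zero structure in $x_{i,r}$ will be needed in the later sections.
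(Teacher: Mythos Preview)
Your proposal is correct and follows essentially the same approach as the paper: both compute $C_{i,j,r,s}$ by tracking how the shift operators $d_{i,r}$, $d_{j,s}$ act on the rational prefactors of the $y$'s, then take the ratio case by case. Your formulation via the identity $C_{i,j,r,s}=\dfrac{P_{j,s}\,\sigma_{j,s}(P_{i,r})}{P_{i,r}\,\sigma_{i,r}(P_{j,s})}$ is just a slightly more systematic packaging of what the paper does by computing $d_{j,s}y_{i,r}$ and $d_{i,r}y_{j,s}$ separately and dividing; in particular, your explicit invocation of the $\tau$-invariance $n_{i,\tau j}=n_{j,\tau i}$ to cancel the $V_{\tau j}$/$V_{\tau i}$ contributions in the case $j\notin\{i,\tau i\}$ makes transparent a cancellation that the paper's proof passes over silently.
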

Obviously, it can also be written in a much more compact form when $ r\neq s $ or $ j\notin\{i,\tau i\} $:
\begin{equation}\label{equ:Cijrs}
    C_{i,j,r,s}=\frac{x_{i,r}-x_{j,s}+\frac{c_{ij}}{2}\hbar}{x_{i,r}-x_{j,s}-\frac{c_{ij}}{2}\hbar}.
\end{equation}

\begin{proof}
    We prove this by considering what is changed when we move $ d_{j,s} $ past $ y_{i,r} $. The first case is trivial, and we will leave the third case alone. So there are three cases:
    \begin{itemize}
        \item If $ j=i $ and $ s\neq r $, then $ d_{j,s} $ commutes with all factors in $ y_{i,r} $ except possible $ (x_{i,r}+\frac{\hbar}{2}-x_{\tau i,s}) $ (if $ \delta_{i\to\tau i}=1 $) and $ \frac{1}{(x_{i,r}-x_{i,s})} $. Thus we have
        \begin{equation*}
            d_{i,s}y_{i,r}=\bigg( \frac{x_{i,r}+x_{i,s}+\frac{3\hbar}{2}}{x_{i,r}+x_{i,s}+\frac{\hbar}{2}} \bigg) ^{\delta_{i\to\tau i}}\frac{x_{i,r}-x_{i,s}}{x_{i,r}-x_{i,s}-\hbar}y_{i,r}d_{i,s}.
        \end{equation*}
        By the exact same reason,
        \begin{equation*}
            d_{i,r}y_{i,s}=\bigg( \frac{x_{i,r}+x_{i,s}+\frac{3\hbar}{2}}{x_{i,r}+x_{i,s}+\frac{\hbar}{2}} \bigg) ^{\delta_{i\to\tau i}}\frac{x_{i,s}-x_{i,r}}{x_{i,s}-x_{i,r}-\hbar}y_{i,s}d_{i,r}.
        \end{equation*}
        So
        \begin{equation*}
            C_{i,i,r,s}=\frac{x_{i,r}-x_{i,s}+\hbar}{x_{i,r}-x_{i,s}-\hbar}.
        \end{equation*}

        \item If $ j=\tau i $ and $ s\neq r $, then $ d_{j,s} $ commutes and not commutes with the same factors as above, but with different shifts. Thus we have
        \begin{equation*}
            d_{\tau i,s}y_{i,r}=\bigg( \frac{x_{i,r}+x_{i,s}-\frac{\hbar}{2}}{x_{i,r}+x_{i,s}+\frac{\hbar}{2}} \bigg) ^{\delta_{i\to\tau i}}\frac{x_{i,r}-x_{i,s}}{x_{i,r}-x_{i,s}+\hbar}y_{i,r}d_{\tau i,s}.
        \end{equation*}
        Exchange $ i,r $ and $ \tau i,s $, we get
        \begin{equation*}
            d_{i,r}y_{\tau i,s}=\bigg( \frac{-x_{i,r}-x_{i,s}-\frac{\hbar}{2}}{-x_{i,r}-x_{i,s}+\frac{\hbar}{2}} \bigg) ^{\delta_{\tau i\to i}}\frac{x_{i,r}-x_{i,s}}{x_{i,r}-x_{i,s}+\hbar}y_{i,r}d_{\tau i,s}.
        \end{equation*}
        So
        \begin{equation*}
            C_{i,\tau i,r,s}=\bigg( \frac{x_{i,r}+x_{i,s}-\frac{\hbar}{2}}{x_{i,r}+x_{i,s}+\frac{\hbar}{2}} \bigg) ^{-c_{i,\tau i}}.
        \end{equation*}

        \item If $ j\notin\{i,\tau i\} $, then $ d_{j,s} $ commutes with all factors in $ y_{i,r} $ except possible $ (x_{i,r}+\frac{\hbar}{2}-x_{j,s}) $ (if $ \delta_{i\to j}=1 $). Thus we have
        \begin{equation*}
            d_{j,s}y_{i,r}=\bigg( \frac{x_{i,r}-x_{j,s}-\frac{\hbar}{2}}{x_{i,r}-x_{j,s}+\frac{\hbar}{2}} \bigg) ^{\delta_{i\to j}}y_{i,r}d_{j,s}.
        \end{equation*}
        Exchange $ i,r $ and $ j,s $, we get
        \begin{equation*}
            d_{i,r}y_{j,s}=\bigg( \frac{x_{j,s}-x_{i,r}-\frac{\hbar}{2}}{x_{j,s}-x_{i,r}+\frac{\hbar}{2}} \bigg) ^{\delta_{j\to i}}y_{j,s}d_{i,r}.
        \end{equation*}
        So
        \begin{equation*}
            C_{i,j,r,s}=\bigg( \frac{x_{i,r}-x_{j,s}-\frac{\hbar}{2}}{x_{i,r}-x_{j,s}+\frac{\hbar}{2}} \bigg) ^{-c_{ij}}.
        \end{equation*}
    \end{itemize}
    Thus Lemma \ref{lem:yy} is proved.
\end{proof}

The commutation relations between $ y_{j,s} $ and $ H_i(u) $ will also be useful:

\begin{lemma}\label{lem:yH}
    We have
    \begin{equation*}
        y_{j,s}H_i(u)=D_{i,j,s}(u)H_i(u)y_{j,s},
    \end{equation*}
    where
    \begin{equation*}
        D_{i,j,s}(u)=\begin{cases}
            \left( \frac{u-x_{i,s}-\hbar}{u-x_{i,s}} \right) ^{-c_{i,\tau i}}\frac{u+x_{i,s}-\frac{\hbar}{2}}{u+x_{i,s}+\frac{3\hbar}{2}}, & j=i \\
            \left( \frac{u-x_{i,s}+\hbar}{u-x_{i,s}} \right) ^{-c_{i,\tau i}}\frac{u+x_{i,s}+\frac{\hbar}{2}}{u+x_{i,s}-\frac{3\hbar}{2}}, & j=\tau i \\
            \left( \frac{u+x_{j,s}+\hbar}{u+x_{j,s}} \right) ^{-c_{ij}}\left( \frac{u-x_{j,s}-\hbar}{u-x_{j,s}} \right) ^{-c_{\tau i,j}}, & j\notin\{i,\tau i\}
        \end{cases}
    \end{equation*}
\end{lemma}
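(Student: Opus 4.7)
\textbf{Proof plan for \cref{lem:yH}.} The key observation is that $y_{j,s}$ factors uniquely as $f_{j,s}(x)\cdot d_{j,s}$, where $f_{j,s}(x)$ is the rational function in the $x$-variables sitting in front of $d_{j,s}$ in the definition of $y_{j,s}$, while $H_i(u)$ is itself a rational function of the $x$-variables with no difference operators. Let $\sigma_{j,s}$ denote the automorphism of $\mathbb{C}(\mathfrak{t}\times\mathbb{A}^1)$ corresponding to conjugation by $d_{j,s}$, namely $\sigma_{j,s}(x_{j,s})=x_{j,s}+\hbar$, $\sigma_{j,s}(x_{\tau j,s})=x_{\tau j,s}-\hbar$, and $\sigma_{j,s}$ fixes all other $x_{i',k}$. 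Then
\[
    y_{j,s}H_i(u)=f_{j,s}(x)\,d_{j,s}H_i(u)=f_{j,s}(x)\bigl(\sigma_{j,s}H_i(u)\bigr)d_{j,s}=\bigl(\sigma_{j,s}H_i(u)\bigr)y_{j,s},
\]
so the entire lemma reduces to computing $D_{i,j,s}(u)=\sigma_{j,s}H_i(u)/H_i(u)$.

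Reading off the definition \eqref{equ:Hu}, the constant signs $(-1)^{v_i-1}$, $(-1)^{\delta_{i\to\tau i}}$, the $u$-only factor $\bigl(\tfrac{2u}{(2u-\hbar/2)(2u+\hbar/2)}\bigr)^{c_{i,\tau i}}$, and the flavor factor $W_i(-u)W_{\tau i}(u)$ contain no $x$-variables and drop out of the ratio. The only pieces that can contribute are the denominator $V_i(-u+\tfrac{\hbar}{2})V_i(-u-\tfrac{\hbar}{2})$ and the products $\prod_{s(h)=i}V_{t(h)}(-u)\prod_{s(h)=\tau i}V_{t(h)}(u)$. The computation now splits into the three stated cases.

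For $j=i$: only the denominator and those factors $V_{t(h)}(\pm u)$ with $t(h)\in\{i,\tau i\}$ move. The denominator factors telescope: the shift turns the pair $(-u+\tfrac{\hbar}{2}-x_{i,s})(-u-\tfrac{\hbar}{2}-x_{i,s})$ into $(-u-\tfrac{\hbar}{2}-x_{i,s})(-u-\tfrac{3\hbar}{2}-x_{i,s})$, producing the ratio $\tfrac{u+x_{i,s}-\hbar/2}{u+x_{i,s}+3\hbar/2}$. If $c_{i,\tau i}=-1$ there is exactly one edge between $i$ and $\tau i$; one checks that both possible orientations (contributing $V_{\tau i}(-u)$ in the first product or $V_i(u)$ in the second) yield the same ratio $\tfrac{u-x_{i,s}-\hbar}{u-x_{i,s}}$ under $\sigma_{i,s}$, giving the factor $\bigl(\tfrac{u-x_{i,s}-\hbar}{u-x_{i,s}}\bigr)^{-c_{i,\tau i}}$. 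The case $j=\tau i$ is identical after replacing $\hbar\mapsto -\hbar$, reflecting that $\sigma_{\tau i,s}$ shifts $x_{i,s}$ in the opposite direction. For $j\notin\{i,\tau i\}$, the denominator is untouched and one simply sums the contributions from each edge $h$ between $\{i,\tau i\}$ and $\{j,\tau j\}$. Using the $\tau$-symmetry of the quiver ($h\mapsto\tau h$ gives bijections $\alpha_{i\to j}\leftrightarrow\alpha_{\tau j\to\tau i}$ and $\alpha_{i\to\tau j}\leftrightarrow\alpha_{\tau j\to i}$), the total multiplicities regroup into Cartan entries via $\alpha_{i\to j}+\alpha_{\tau i\to\tau j}=\alpha_{i\to j}+\alpha_{j\to i}=-c_{ij}$ and $\alpha_{i\to\tau j}+\alpha_{\tau i\to j}=-c_{\tau i,j}$, matching the formula exactly.

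The calculation is entirely mechanical once the shift interpretation is fixed; the only subtle step is the orientation bookkeeping in the $j=i$ case, where one must verify that the two a priori different scenarios (edge $i\to\tau i$ versus $\tau i\to i$) actually produce the same contribution after rewriting $x_{\tau i,k}=-x_{i,k}$, and in the $j\notin\{i,\tau i\}$ case, where one must identify the four orientation counts with $-c_{ij}$ and $-c_{\tau i,j}$ using the $\tau$-equivariance of $Q_1$. No other ingredients are needed.
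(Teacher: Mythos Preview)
Your proposal is correct and follows essentially the same approach as the paper: both reduce the lemma to tracking how $d_{j,s}$ conjugates each factor of $H_i(u)$, with the only difference being that for the case $j=\tau i$ the paper invokes $H_i(u)=H_{\tau i}(-u)$ while you instead note that $\sigma_{\tau i,s}$ reverses the sign of the $\hbar$-shift on $x_{i,s}$. The orientation bookkeeping in the $j\notin\{i,\tau i\}$ case is likewise identical to the paper's, so nothing substantive is missing.
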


It can also be written in a unified form:
\begin{equation}\label{equ:Dijs}
    D_{i,j,s}(u)=\frac{u+x_{j,s}+(1-c_{ij})\frac{\hbar}{2}}{u+x_{j,s}+(1+c_{ij})\frac{\hbar}{2}}\cdot\frac{u-x_{j,s}-(1-c_{\tau i,j})\frac{\hbar}{2}}{u-x_{j,s}-(1+c_{\tau i,j})\frac{\hbar}{2}}.
\end{equation}

\begin{proof}
    The proof is similar to the previous one. We consider what is changed when we move $ d_{j,s} $ past $ H_i(u) $. There are three cases:
    \begin{itemize}
        \item If $ j=i $, then $ d_{j,s} $ commutes with all factors in $ H_i(u) $ except possible $ (-u-x_{\tau i,s}) $ (if $ \delta_{i\to\tau i}=1 $) and $ (u-x_{i,s}) $ (if $ \delta_{i\to\tau i}=1 $), and $ \frac{1}{(-u+\frac{\hbar}{2}-x_{i,s})(-u-\frac{\hbar}{2}-x_{i,s})} $. Thus we have
        \begin{align*}
            D_{i,i,s}(u)&=\bigg(\frac{-u+x_{i,s}+\hbar}{-u+x_{i,s}}\bigg)^{\delta_{i\to\tau i}}\bigg(\frac{u-x_{i,s}-\hbar}{u-x_{i,s}}\bigg)^{\delta_{\tau i\to i}}\frac{-u+\frac{\hbar}{2}-x_{i,s}}{-u-\frac{3\hbar}{2}-x_{i,s}}\\
            &=\bigg(\frac{u-x_{i,s}-\hbar}{u-x_{i,s}}\bigg)^{-c_{i,\tau i}}\frac{u+x_{i,s}-\frac{\hbar}{2}}{u+x_{i,s}+\frac{3\hbar}{2}}.
        \end{align*}

        \item If $ j=\tau i $, then using $ H_i(u)=H_{\tau i}(-u) $, we have
        \begin{equation*}
            D_{i,\tau i,r}(u)=\bigg(\frac{u-x_{i,s}+\hbar}{u-x_{i,s}}\bigg)^{-c_{i,\tau i}}\frac{u+x_{i,s}+\frac{\hbar}{2}}{u+x_{i,s}-\frac{3\hbar}{2}}.
        \end{equation*}

        \item If $ j\notin\{i,\tau i\} $, then $ d_{j,s} $ commutes with all factors in $ H_i(u) $ except possible $ (-u-x_{j,s}) $ (if $ \delta_{i\to j}=1 $), $ (-u-x_{\tau j,s}) $ (if $ \delta_{i\to\tau j}=1 $), $ (u-x_{j,s}) $ (if $ \delta_{\tau i\to j}=1 $), and $ (u-x_{\tau j,s}) $ (if $ \delta_{\tau i\to\tau j}=1 $). Thus we have
        \begin{align*}
            &\mspace{24mu}D_{i,j,s}(u)\\
            &=\bigg(\frac{-u-x_{j,s}-\hbar}{-u-x_{j,s}}\bigg)^{\delta_{i\to j}}\bigg(\frac{-u+x_{j,s}+\hbar}{-u+x_{j,s}}\bigg)^{\delta_{i\to\tau j}}\bigg(\frac{u-x_{j,s}-\hbar}{u-x_{j,s}}\bigg)^{\delta_{\tau i\to j}}\bigg(\frac{u+x_{j,s}+\hbar}{u+x_{j,s}}\bigg)^{\delta_{\tau i\to\tau j}}\\
            &=\bigg(\frac{u+x_{j,s}+\hbar}{u+x_{j,s}}\bigg)^{-c_{ij}}\bigg(\frac{u-x_{j,s}-\hbar}{u-x_{j,s}}\bigg)^{-c_{\tau i,j}}.
        \end{align*}
    \end{itemize}
    Thus Lemma \ref{lem:yH} is proved.
\end{proof}

\subsection{Expressions of $ (H_i(z))^\circ $}

We need some combinatorial identities.
For a Laurant series $f(z)=\sum_{i\in \mathbb{Z}} a_iz^{i}$ in $z^{-1}$,
we denote the truncation 
\[\big(f(z)\big)^{\circ}
:=\sum_{i< 0}a_iz^i.\] 
By definition,
\begin{equation}\label{equ:trundivbyz}
    u\big(f(u)\big)^\circ+v\big(f(-v)\big)^\circ=\sum_{i<0}a_iu^{i+1}-\sum_{i<0}a_i(-v)^{i+1}=\big(uf(u)\big)^{\circ}-\big((-v)f(-v)\big)^{\circ}.
\end{equation}

We will use frequently the following simple lemma. 
\begin{lemma}\cite[Lemma 5.1]{SSX25}\label{lem:partialfrac}
    Assume $f(z)=\dfrac{g(z)}{\prod_{i=1}^n (z-z_i)}$, where $z_1,\ldots,z_n$ are distinct and $g(z)\in \mathbb{C}[z]$. Then
\[(f(z))^\circ=\sum_{i=1}^n\frac{1}{z-z_i}\frac{g(z_i)}{\prod_{j\neq i}(z_i-z_j)}=\sum_{i=1}^n\frac{\operatorname{Res}_{z=z_i}(f(z)dz)}{z-z_i}.\]
\end{lemma}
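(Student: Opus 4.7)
The plan is to reduce the statement to a standard partial fraction decomposition and then verify that the truncation $(\cdot)^\circ$ picks out precisely the simple-pole part.

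First I would perform polynomial long division to write
\[
    f(z) = q(z) + \frac{r(z)}{\prod_{i=1}^n (z-z_i)}
\]
with $q(z), r(z) \in \mathbb{C}[z]$ and $\deg r < n$. Because the $z_i$ are distinct, the standard partial fraction theorem then gives
\[
    f(z) = q(z) + \sum_{i=1}^n \frac{c_i}{z-z_i},
\]
where the coefficients $c_i$ are determined by multiplying through by $(z-z_i)$ and evaluating at $z = z_i$, yielding
\[
    c_i = \frac{g(z_i)}{\prod_{j \neq i}(z_i-z_j)} = \operatorname{Res}_{z=z_i}\bigl(f(z)\, dz\bigr).
\]
This already delivers the two right-hand sides of the claimed identity.

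Next I would analyze how the truncation $(\cdot)^\circ$ acts on each summand. Since $q(z)$ is a polynomial, its Laurent expansion in $z^{-1}$ involves only non-negative powers of $z$, so $(q(z))^\circ = 0$. The geometric expansion at infinity
\[
    \frac{1}{z-z_i} = \sum_{k \geq 0} z_i^k\, z^{-k-1}
\]
lies entirely in strictly negative powers of $z$; hence $\bigl(\tfrac{c_i}{z-z_i}\bigr)^\circ = \tfrac{c_i}{z-z_i}$ under the identification of this rational function with its Laurent series at infinity. Summing these contributions reassembles $\sum_i \tfrac{c_i}{z-z_i}$, producing the claimed formula.

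The argument is essentially routine; the only subtlety is bookkeeping around which Laurent expansion $(\cdot)^\circ$ refers to (the one in powers of $z^{-1}$, i.e.\ at $\infty$), and there is no real obstacle beyond making this identification explicit. No deeper input is required.
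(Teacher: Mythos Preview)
Your proof is correct and is the standard partial-fraction argument. The paper itself does not supply a proof of this lemma at all: it is quoted from \cite{SSX25} and stated without argument, so there is nothing further to compare.
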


\begin{lemma}\label{lem:resH}
    The function $ H_i(u) $ has simple poles at $ -x_{i,r}\pm\frac{\hbar}{2}(1\leq r\leq v_i) $ and at $ 0 $ if $ c_{i,\tau i}=-1 $, and its residues are given by
    \begin{gather*}
        \operatorname{Res}_{u=-x_{i,r}-\frac{\hbar}{2}}(H_i(u))=-\frac{1}{\hbar}\bigg(\frac{-2x_{i,r}-\tfrac{\hbar}{2}}{-2x_{i,r}-\hbar}\bigg)^{-c_{i,\tau i}}y_{i,r}y_{\tau i,r},\\
        \operatorname{Res}_{u=-x_{i,r}+\frac{\hbar}{2}}(H_i(u))=\frac{1}{\hbar}\bigg(\frac{-2x_{i,r}+\frac{\hbar}{2}}{-2x_{i,r}+\hbar}\bigg)^{-c_{i,\tau i}}y_{\tau i,r}y_{i,r}.
    \end{gather*}
\end{lemma}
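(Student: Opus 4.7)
\emph{Plan of proof.} The strategy is to locate the poles of $H_i(u)$ directly from the product formula \eqref{equ:Hu}, compute each residue by elementary evaluation, and then match the resulting rational function with $y_{i,r}y_{\tau i,r}$ after simplifying the latter inside $\Diff_\hbar(T_V^\tau)\otimes H_{G_W}^*(\pt)$.

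\emph{Pole structure and residue extraction.} Inspecting \eqref{equ:Hu}, the only possible poles in $u$ come from the denominator $V_i(-u+\tfrac{\hbar}{2})V_i(-u-\tfrac{\hbar}{2})$ and, when $c_{i,\tau i}=-1$, from the factor $\bigl(\tfrac{2u}{(2u-\tfrac{\hbar}{2})(2u+\tfrac{\hbar}{2})}\bigr)^{c_{i,\tau i}}$. Since $V_i(z)=\prod_{k=1}^{v_i}(z-x_{i,k})$ and the $x_{i,k}$ are generic, each $V_i(-u\mp\tfrac{\hbar}{2})$ contributes a simple pole at $u=-x_{i,r}\pm\tfrac{\hbar}{2}$, while the additional pole at $u=0$ appears precisely when $c_{i,\tau i}=-1$, matching the pole list in the statement. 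To extract the residue at $u=-x_{i,r}-\tfrac{\hbar}{2}$, I would multiply by $(u+x_{i,r}+\tfrac{\hbar}{2})$; the cancellation against the factor $-(u+x_{i,r}+\tfrac{\hbar}{2})$ in $V_i(-u-\tfrac{\hbar}{2})$ contributes $-1/V_{i,r}(x_{i,r})$, and $V_i(-u+\tfrac{\hbar}{2})$ evaluates to $V_i(x_{i,r}+\hbar)=\hbar\prod_{k\neq r}(x_{i,r}+\hbar-x_{i,k})$, supplying the overall prefactor $-1/\hbar$. Substituting $u=-x_{i,r}-\tfrac{\hbar}{2}$ into the remaining polynomial factors of \eqref{equ:Hu} is routine.

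\emph{Matching with $y_{i,r}y_{\tau i,r}$.} Writing $y_{i,r}=A_{i,r}d_{i,r}$ and $y_{\tau i,r}=A_{\tau i,r}d_{i,r}^{-1}$ for the rational prefactors read off from their definitions in \cref{sec:preparations}, the shift operators cancel and
\[
    y_{i,r}y_{\tau i,r}=A_{i,r}\cdot\sigma_{i,r}(A_{\tau i,r}),
\]
where $\sigma_{i,r}$ is the substitution $x_{i,r}\mapsto x_{i,r}+\hbar$. I then compare term by term with the residue: $W_{\tau i}(-x_{i,r}-\tfrac{\hbar}{2})$ inside $A_{i,r}$ reproduces $W_{\tau i}(u)|_{u=-x_{i,r}-\hbar/2}$; the shifted $W_i(x_{i,r}-\tfrac{\hbar}{2})$ inside $\sigma_{i,r}(A_{\tau i,r})$ becomes $W_i(x_{i,r}+\tfrac{\hbar}{2})=W_i(-u)|_{u=-x_{i,r}-\hbar/2}$; the two $V$-products combine to give $\prod_{s(h)=i}V_{t(h)}(-u)\prod_{s(h)=\tau i}V_{t(h)}(u)$ at this point, after cancelling one extra factor against the shifted denominator $V_{i,r}(x_{i,r}+\hbar)$ and collecting $V_i(-u+\tfrac{\hbar}{2})$; and the distinguished $\tau$-fixed factors combine with the evaluation of $(2u-\tfrac{\hbar}{2})(2u+\tfrac{\hbar}{2})/(2u)$ at $u=-x_{i,r}-\tfrac{\hbar}{2}$ to yield the expression $\bigl(\tfrac{-2x_{i,r}-\tfrac{\hbar}{2}}{-2x_{i,r}-\hbar}\bigr)^{-c_{i,\tau i}}$. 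The second residue formula follows by repeating the argument at $\tau i$, using $x_{\tau i,r}=-x_{i,r}$, and invoking the symmetry $H_{\tau i}(u)=H_i(-u)$ from \cref{lem:Hiz}(1) to swap the two poles.

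\emph{Main obstacle.} The hard part is the sign-and-shift bookkeeping in the matching step: separating edges with $t(h)\in\{i,\tau i\}$ (for which the convention $x_{\tau i,k}=-x_{i,k}$ interacts nontrivially with $\sigma_{i,r}$), tracking which factors in $V_i(-u\pm\tfrac{\hbar}{2})$ cancel against which factors in $\sigma_{i,r}(A_{\tau i,r})$, and ensuring that the signs $(-1)^{v_i-1}$, $(-1)^{\delta_{i\to\tau i}}$, and the $(-1)^{v_i-1}$ appearing in the denominator of $y_{\tau i,r}$ combine correctly. Once this bookkeeping is untangled, the lemma reduces to a direct rational-function identity.
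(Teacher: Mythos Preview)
Your proposal is correct and follows essentially the same route as the paper: both compute $y_{i,r}y_{\tau i,r}$ by pushing $d_{i,r}$ through the rational prefactor of $y_{\tau i,r}$, evaluate $H_i(u)$ at $u=-x_{i,r}-\tfrac{\hbar}{2}$ after stripping the pole, and then compare the two expressions termwise, obtaining the second residue from the symmetry $H_{\tau i}(u)=H_i(-u)$. The paper's only organizational difference is that it packages the pole-stripping step into an auxiliary function $H_{i\setminus r}(u):=(-u+\tfrac{\hbar}{2}-x_{i,r})(-u-\tfrac{\hbar}{2}-x_{i,r})H_i(u)$ before evaluating, and it computes $d_{i,r}y_{\tau i,r}$ explicitly rather than writing $\sigma_{i,r}(A_{\tau i,r})$ abstractly; the ``sign-and-shift bookkeeping'' you flag as the main obstacle is exactly where the paper spends its lines, in particular tracking the extra ratio $\tfrac{-2x_{i,r}-3\hbar/2}{-2x_{i,r}-\hbar/2}$ that appears when $\sigma_{i,r}$ hits $V_i(-x_{i,r}+\tfrac{\hbar}{2})$ in the case $\delta_{\tau i\to i}=1$.
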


\begin{proof}
    We define auxiliary functions
    \begin{equation*}
        H_{i\setminus r}(u):=(-u+\tfrac{\hbar}{2}-x_{i,r})(-u-\tfrac{\hbar}{2}-x_{i,r})H_i(u).
    \end{equation*}

    Direct computation shows that
    \begin{align*}
        d_{i,r}y_{\tau i,r}&=d_{i,r}(-2x_{i,r}+\tfrac{3\hbar}{2})^{\delta_{\tau i\to i}}\prod_{\substack{h\in Q_1 \\ s(h)=\tau i}}V_{t(h)}(-x_{i,r}+\tfrac{\hbar}{2})\cdot\frac{W_{i}(x_{i,r}-\frac{\hbar}{2})}{V_{\tau i,r}(-x_{i,r})}d_{i,r}^{-1}\\
        &=(-2x_{i,r}-\tfrac{\hbar}{2})^{\delta_{\tau i\to i}}\prod_{\substack{h\in Q_1 \\ s(h)=\tau i}}V_{t(h)}(-x_{i,r}-\tfrac{\hbar}{2})\cdot\bigg(\frac{-2x_{i,r}-\frac{3\hbar}{2}}{-2x_{i,r}-\frac{\hbar}{2}}\bigg)^{\delta_{\tau i\to i}}\cdot\frac{W_{i}(x_{i,r}+\frac{\hbar}{2})}{V_{\tau i,r}(-x_{i,r}-\hbar)}\\
        &=(-1)^{v_i-1}(-2x_{i,r}-\tfrac{3\hbar}{2})^{\delta_{\tau i\to i}}\prod_{\substack{h\in Q_1 \\ s(h)=\tau i}}V_{t(h)}(-x_{i,r}-\tfrac{\hbar}{2})\cdot\frac{W_{i}(x_{i,r}+\frac{\hbar}{2})}{V_{i,r}(x_{i,r}+\hbar)}
    \end{align*}
    By multiplying the function $ y_{i,r}d_{i,r}^{-1} $ from the left, we get
    \begin{align*}
        y_{i,r}y_{\tau i,r}&=(-1)^{v_i-1}(2x_{i,r}+\tfrac{3\hbar}{2})^{\delta_{i\to\tau i}}(-2x_{i,r}-\tfrac{3\hbar}{2})^{\delta_{\tau i\to i}}\\
        &\mspace{21mu}\cdot\prod_{\substack{h\in Q_1 \\ s(h)=i}}V_{t(h)}(x_{i,r}+\tfrac{\hbar}{2})\prod_{\substack{h\in Q_1 \\ s(h)=\tau i}}V_{t(h)}(-x_{i,r}-\tfrac{\hbar}{2})\cdot\frac{W_{\tau i}(-x_{i,r}-\frac{\hbar}{2})W_{i}( x_{i,r}+\frac{\hbar}{2} ) }{V_{i,r}(x_{i,r})V_{i,r}(x_{i,r}+\hbar)}\\
        &=(-1)^{v_i-1+\delta_{i\to\tau i}}(-2x_{i,r}-\tfrac{3\hbar}{2})^{-c_{i,\tau i}}\\
        &\mspace{21mu}\cdot\prod_{\substack{h\in Q_1 \\ s(h)=i}}V_{t(h)}(x_{i,r}+\tfrac{\hbar}{2})\prod_{\substack{h\in Q_1 \\ s(h)=\tau i}}V_{t(h)}(-x_{i,r}-\tfrac{\hbar}{2})\cdot\frac{W_{\tau i}(-x_{i,r}-\frac{\hbar}{2})W_{i}( x_{i,r}+\frac{\hbar}{2} ) }{V_{i,r}(x_{i,r})V_{i,r}(x_{i,r}+\hbar)}.
    \end{align*}

    While by definition of $ H_{i\setminus r}(u) $, we have
    \begin{align*}
        &\mspace{24mu}H_{i\setminus r}(-x_{i,r}-\tfrac{\hbar}{2})\\
        &=(-1)^{v_i-1+\delta_{i\to\tau i}}\bigg(\frac{-2x_{i,r}-\hbar}{(-2x_{i,r}-\frac{3\hbar}{2})(-2x_{i,r}-\frac{\hbar}{2})}\bigg)^{c_{i,\tau i}}\\
        &\mspace{21mu}\cdot\prod_{\substack{h\in Q_1 \\ s(h)=i}}V_{t(h)}(x_{i,r}+\tfrac{\hbar}{2})\prod_{\substack{h\in Q_1 \\ s(h)=\tau i}}V_{t(h)}(-x_{i,r}-\tfrac{\hbar}{2})\cdot\frac{W_i(x_{i,r}+\tfrac{\hbar}{2})W_{\tau i}(-x_{i,r}-\tfrac{\hbar}{2})}{V_{i,r}(x_{i,r}+\hbar)V_{i,r}(x_{i,r})}\\
        &=\bigg(\frac{-2x_{i,r}-\frac{\hbar}{2}}{-2x_{i,r}-\hbar}\bigg)^{-c_{i,\tau i}}y_{i,r}y_{\tau i,r}.
    \end{align*}
    so
    \begin{align*}
        \operatorname{Res}_{u=-x_{i,r}-\frac{\hbar}{2}}(H_i(u))&=\left.\frac{H_{i\setminus r}(u)}{u+x_{i,r}-\frac{\hbar}{2}}\right|_{u=-x_{i,r}-\frac{\hbar}{2}}\\
        &=\frac{H_{i\setminus r}(-x_{i,r}-\frac{\hbar}{2})}{-\hbar}\\
        &=-\frac{1}{\hbar}\bigg(\frac{-2x_{i,r}-\frac{\hbar}{2}}{-2x_{i,r}-\hbar}\bigg)^{-c_{i,\tau i}}y_{i,r}y_{\tau i,r}.
    \end{align*}
    Exchange $ i $ and $ \tau i $, we get
    \begin{align*}
        \operatorname{Res}_{u=-x_{i,r}+\frac{\hbar}{2}}(H_i(u))&=\operatorname{Res}_{u=x_{\tau i,r}+\frac{\hbar}{2}}(H_{\tau i}(-u))\\
        &=-\operatorname{Res}_{u=-x_{\tau i,r}-\frac{\hbar}{2}}(H_{\tau i}(u))\\
        &=\frac{1}{\hbar}\bigg(\frac{-2x_{i,r}+\frac{\hbar}{2}}{-2x_{i,r}+\hbar}\bigg)^{-c_{i,\tau i}}y_{\tau i,r}y_{i,r}.
    \end{align*}
\end{proof}
Combining Lemma \ref{lem:partialfrac} and Lemma \ref{lem:resH}, we get the following corollary.
\begin{corol}\label{cor:cequal0H}
    We have
    \begin{equation*}
        \hbar (H_i(u))^\circ=\sum_{r=1}^{v_i}\frac{1}{u+x_{i,r}-\frac{\hbar}{2}}y_{\tau i,r}y_{i,r}-\sum_{r=1}^{v_i}\frac{1}{u+x_{i,r}+\frac{\hbar}{2}}y_{i,r}y_{\tau i,r}
    \end{equation*}
    for $ c_{i,\tau i}=0 $, and
    \begin{equation*}
        (2u\hbar H_i(u))^\circ=\sum_{r=1}^{v_i}\frac{-2x_{i,r}+\frac{\hbar}{2}}{u+x_{i,r}-\frac{\hbar}{2}}y_{\tau i,r}y_{i,r}-\sum_{r=1}^{v_i}\frac{-2x_{i,r}-\frac{\hbar}{2}}{u+x_{i,r}+\frac{\hbar}{2}}y_{i,r}y_{\tau i,r}
    \end{equation*}
    for $ c_{i,\tau i}=-1 $. One can also write them in a unified way:
    \begin{equation*}
        (2u\hbar H_i(u))^\circ=\sum_{r=1}^{v_i}\frac{-2x_{i,r}+\hbar+\frac{c_{i,\tau i}\hbar}{2}}{u+x_{i,r}-\frac{\hbar}{2}}y_{\tau i,r}y_{i,r}-\sum_{r=1}^{v_i}\frac{-2x_{i,r}-\hbar-\frac{c_{i,\tau i}\hbar}{2}}{u+x_{i,r}+\frac{\hbar}{2}}y_{i,r}y_{\tau i,r}.
    \end{equation*}
\end{corol}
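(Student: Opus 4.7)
The plan is to combine the residue computation \cref{lem:resH} with the partial fraction identity \cref{lem:partialfrac}. The observation is that $H_i(u)$ is a rational function in $u$ with coefficients in a commutative algebra, so its truncation $(H_i(u))^\circ$ is determined by its residues at the finite poles enumerated in \cref{lem:resH}; the corollary is then pure bookkeeping.

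For $c_{i,\tau i}=0$ the factor $\bigl(\tfrac{2u}{(2u-\hbar/2)(2u+\hbar/2)}\bigr)^{c_{i,\tau i}}$ in \cref{equ:Hu} is trivial, so $H_i(u)$ has simple poles only at $u=-x_{i,r}\pm\tfrac{\hbar}{2}$ for $1\leq r\leq v_i$. I would apply \cref{lem:partialfrac} directly to $H_i(u)$ and substitute the residues from \cref{lem:resH}, which in this case simplify to $-\hbar^{-1}y_{i,r}y_{\tau i,r}$ at $u=-x_{i,r}-\tfrac{\hbar}{2}$ and $\hbar^{-1}y_{\tau i,r}y_{i,r}$ at $u=-x_{i,r}+\tfrac{\hbar}{2}$ (the extra prefactor of \cref{lem:resH} is raised to the power $-c_{i,\tau i}=0$ and disappears). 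Multiplying the resulting identity by $\hbar$ yields the first displayed formula.

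For $c_{i,\tau i}=-1$ the function $H_i(u)$ acquires an additional simple pole at $u=0$, so I would instead apply \cref{lem:partialfrac} to $2u\hbar H_i(u)$, which is regular at $u=0$ and has simple poles only at $u=-x_{i,r}\pm\tfrac{\hbar}{2}$. Its residue there equals $2(-x_{i,r}\pm\tfrac{\hbar}{2})\hbar=(-2x_{i,r}\pm\hbar)\hbar$ times the residue of $H_i(u)$, and the factor $-2x_{i,r}\pm\hbar$ in the denominator of \cref{lem:resH} (now arising with exponent $-c_{i,\tau i}=1$) cancels against this, leaving exactly $\pm(-2x_{i,r}\pm\tfrac{\hbar}{2})$ times the appropriate ordering of $y_{i,r}$ and $y_{\tau i,r}$. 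Summing via \cref{lem:partialfrac} produces the second displayed formula, and the unified expression at the end records both cases simultaneously by keeping $c_{i,\tau i}$ symbolic.

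There is no conceptual obstacle; the only thing to watch is the bookkeeping of signs and of the noncommutative ordering $y_{i,r}y_{\tau i,r}$ versus $y_{\tau i,r}y_{i,r}$, both of which are fixed unambiguously by \cref{lem:resH}.
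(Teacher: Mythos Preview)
Your proposal is correct and is exactly the approach the paper takes: the paper merely states ``Combining \cref{lem:partialfrac} and \cref{lem:resH}, we get the following corollary'' without further detail, and what you wrote is a faithful unpacking of that combination, including the device of passing to $2u\hbar H_i(u)$ to kill the pole at $u=0$ when $c_{i,\tau i}=-1$.
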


The next lemma is about the value $ H_i(-x_{i,r}-\frac{3\hbar}{2}) $ when $ c_{i,\tau i}=-1 $.

\begin{lemma}\label{lem:Hiat3h/2}
    If $ c_{i,\tau i}=-1 $, then
    \begin{equation*}
        H_i(-x_{i,r}-\tfrac{3\hbar}{2})=\frac{1}{2\hbar^2}\frac{2x_{i,r}+\frac{3\hbar}{2}}{2x_{i,r}+3\hbar}\cdot d_{i,r}y_{i,r}y_{\tau i,r}d_{i,r}^{-1}.
    \end{equation*}
\end{lemma}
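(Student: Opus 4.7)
The plan is to evaluate both sides of the identity directly and check they agree. The left-hand side comes from substituting $u=-x_{i,r}-\tfrac{3\hbar}{2}$ into the formula \eqref{equ:Hu} for $H_i(u)$; since this point is not a pole (cf.\ \cref{lem:resH}), the substitution is straightforward. The denominator $V_i(-u+\tfrac{\hbar}{2})V_i(-u-\tfrac{\hbar}{2})$ becomes $V_i(x_{i,r}+2\hbar)V_i(x_{i,r}+\hbar)$, and extracting the $k=r$ contributions $(2\hbar)$ and $(\hbar)$ produces the $\tfrac{1}{2\hbar^2}$ appearing in the statement together with the denominator $V_{i,r}(x_{i,r}+\hbar)V_{i,r}(x_{i,r}+2\hbar)$. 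The twisting factor $\bigl(\tfrac{2u}{(2u-\hbar/2)(2u+\hbar/2)}\bigr)^{c_{i,\tau i}}$ with $c_{i,\tau i}=-1$ evaluates to $\tfrac{(-2x_{i,r}-\tfrac{7\hbar}{2})(-2x_{i,r}-\tfrac{5\hbar}{2})}{-2x_{i,r}-3\hbar}$, which will be the source of the factors of the form $2x_{i,r}+\tfrac{k\hbar}{2}$ in the answer.

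For the right-hand side, I use the compact form of the scalar function $y_{i,r}y_{\tau i,r}$ derived inside the proof of \cref{lem:resH}. Conjugation by $d_{i,r}$ amounts to the substitution $x_{i,r}\mapsto x_{i,r}+\hbar$ throughout. The crucial subtlety is that when a factor $V_j(\,\cdot\,)$ with $j\in\{i,\tau i\}$ appears—which happens precisely at the unique $\tau$-fixed edge $h$ with $\{s(h),t(h)\}=\{i,\tau i\}$ forced by $c_{i,\tau i}=-1$—the shift acts both on the argument and on the $k=r$ factor inside $V_j$. Consequently $d_{i,r}V_{\tau i}(x_{i,r}+\tfrac{\hbar}{2})d_{i,r}^{-1}$ equals $(2x_{i,r}+\tfrac{5\hbar}{2})V_{\tau i,r}(x_{i,r}+\tfrac{3\hbar}{2})$, whereas the naive evaluation $V_{\tau i}(x_{i,r}+\tfrac{3\hbar}{2})$ equals $(2x_{i,r}+\tfrac{3\hbar}{2})V_{\tau i,r}(x_{i,r}+\tfrac{3\hbar}{2})$; the two differ by $\tfrac{2x_{i,r}+\tfrac{5\hbar}{2}}{2x_{i,r}+\tfrac{3\hbar}{2}}$. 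The mirror case $s(h)=\tau i,\,t(h)=i$ produces the same discrepancy via $V_i(-x_{i,r}-\tfrac{\hbar}{2})$.

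Putting everything together, the two sides agree on all ``generic'' factors (the $W_i$, $W_{\tau i}$, $V_{i,r}$ pieces and the products over edges not involving the $k=r$ entry), so the verification reduces to a small rational identity at $k=r$. On the left the twisting factor contributes $-\tfrac{(2x_{i,r}+\tfrac{7\hbar}{2})(2x_{i,r}+\tfrac{5\hbar}{2})}{2x_{i,r}+3\hbar}$, while on the right the shifted $(-2x_{i,r}-\tfrac{3\hbar}{2})$ contributes $-(2x_{i,r}+\tfrac{7\hbar}{2})$ and the $V$-product contributes the extra $\tfrac{2x_{i,r}+\tfrac{5\hbar}{2}}{2x_{i,r}+\tfrac{3\hbar}{2}}$ described above. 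The prefactor $\tfrac{2x_{i,r}+\tfrac{3\hbar}{2}}{2x_{i,r}+3\hbar}$ in the statement is exactly what is needed to cancel the stray $(2x_{i,r}+\tfrac{3\hbar}{2})$ and insert the $(2x_{i,r}+3\hbar)^{-1}$ required to match the left, and the signs $(-1)^{v_i-1+\delta_{i\to\tau(i)}}$ agree on both sides.

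The main obstacle is this bookkeeping: one must resist identifying the conjugated $d_{i,r}V_{\tau i}(x_{i,r}+\tfrac{\hbar}{2})d_{i,r}^{-1}$ with the naive shifted evaluation $V_{\tau i}(x_{i,r}+\tfrac{3\hbar}{2})$, and instead carefully track the extra multiplicative factor generated by the double occurrence of $x_{i,r}$ at the $\tau$-fixed edge. Once this is understood, the asymmetric prefactor $\tfrac{2x_{i,r}+\tfrac{3\hbar}{2}}{2x_{i,r}+3\hbar}$ in the statement is revealed as the precise compensation for this discrepancy.
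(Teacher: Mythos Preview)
Your proposal is correct and follows essentially the same approach as the paper: a direct comparison of both sides using the explicit scalar formula for $y_{i,r}y_{\tau i,r}$ obtained in the proof of \cref{lem:resH}. The only organizational difference is that the paper conjugates $H_i(-x_{i,r}-\tfrac{3\hbar}{2})$ by $d_{i,r}^{-1}$ and matches it against $y_{i,r}y_{\tau i,r}$, whereas you conjugate $y_{i,r}y_{\tau i,r}$ by $d_{i,r}$ and match it against $H_i(-x_{i,r}-\tfrac{3\hbar}{2})$; both directions require exactly the same bookkeeping at the $\tau$-fixed edge that you correctly isolate.
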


\begin{proof}
    We calculate $ d_{i,r}^{-1}H_i(-x_{i,r}-\frac{3\hbar}{2})d_{i,r} $ first:
    \begin{align*}
        &\mspace{24mu}d_{i,r}^{-1}H_i(-x_{i,r}-\tfrac{3\hbar}{2})d_{i,r}\\
        &=d_{i,r}^{-1}(-1)^{v_i-1+\delta_{i\to\tau i}}\frac{(-2x_{i,r}-\frac{7\hbar}{2})(-2x_{i,r}-\frac{5\hbar}{2})}{-2x_{i,r}-3\hbar}\\
        &\mspace{21mu}\cdot\prod_{\substack{h\in Q_1 \\ s(h)=i}}V_{t(h)}(x_{i,r}+\tfrac{3\hbar}{2})\prod_{\substack{h\in Q_1 \\ s(h)=\tau i}}V_{t(h)}(-x_{i,r}-\tfrac{3\hbar}{2})\cdot\frac{W_i(x_{i,r}+\frac{3\hbar}{2})W_{\tau i}(-x_{i,r}-\frac{3\hbar}{2})}{V_i(x_{i,r}+\hbar)V_i(x_{i,r}+2\hbar)}d_{i,r}\\
        &=(-1)^{v_i-1+\delta_{i\to\tau i}}\frac{(-2x_{i,r}-\frac{3\hbar}{2})(-2x_{i,r}-\frac{\hbar}{2})}{-2x_{i,r}-\hbar}\\
        &\mspace{21mu}\cdot d_{i,r}^{-1}\prod_{\substack{h\in Q_1 \\ s(h)=i}}V_{t(h)}(x_{i,r}+\tfrac{3\hbar}{2})\prod_{\substack{h\in Q_1 \\ s(h)=\tau i}}V_{t(h)}(-x_{i,r}-\tfrac{3\hbar}{2})\cdot\frac{W_i(x_{i,r}+\frac{3\hbar}{2})W_{\tau i}(-x_{i,r}-\frac{3\hbar}{2})}{V_i(x_{i,r}+\hbar)V_i(x_{i,r}+2\hbar)}d_{i,r}\\
        &=(-1)^{v_i-1+\delta_{i\to\tau i}}\frac{(-2x_{i,r}-\frac{3\hbar}{2})(-2x_{i,r}-\frac{\hbar}{2})}{-2x_{i,r}-\hbar}\cdot\frac{2x_{i,r}-\frac{\hbar}{2}}{2x_{i,r}+\frac{\hbar}{2}}\\
        &\mspace{21mu}\cdot\prod_{\substack{h\in Q_1 \\ s(h)=i}}V_{t(h)}(x_{i,r}+\tfrac{\hbar}{2})\prod_{\substack{h\in Q_1 \\ s(h)=\tau i}}V_{t(h)}(-x_{i,r}-\tfrac{\hbar}{2})\cdot d_{i,r}^{-1}\frac{W_i(x_{i,r}+\frac{3\hbar}{2})W_{\tau i}(-x_{i,r}-\frac{3\hbar}{2})}{2\hbar^2V_{i,r}(x_{i,r}+\hbar)V_{i,r}(x_{i,r}+2\hbar)}d_{i,r}\\
        &=(-1)^{v_i-1+\delta_{i\to\tau i}}\frac{(-2x_{i,r}-\frac{3\hbar}{2})(-2x_{i,r}+\frac{\hbar}{2})}{-2x_{i,r}-\hbar}\\
        &\mspace{21mu}\cdot\prod_{\substack{h\in Q_1 \\ s(h)=i}}V_{t(h)}(x_{i,r}+\tfrac{\hbar}{2})\prod_{\substack{h\in Q_1 \\ s(h)=\tau i}}V_{t(h)}(-x_{i,r}-\tfrac{\hbar}{2})\cdot\frac{W_i(x_{i,r}+\frac{\hbar}{2})W_{\tau i}(-x_{i,r}-\frac{\hbar}{2})}{2\hbar^2V_{i,r}(x_{i,r})V_{i,r}(x_{i,r}+\hbar)}\\
        &=\frac{-2x_{i,r}+\frac{\hbar}{2}}{-2x_{i,r}-\hbar}\cdot\frac{1}{2\hbar^2}y_{i,r}y_{\tau i,r},
    \end{align*}
    so
    \begin{align*}
        H_i(-x_{i,r}-\tfrac{3\hbar}{2})&=d_{i,r}\frac{-2x_{i,r}+\frac{\hbar}{2}}{-2x_{i,r}-\hbar}\cdot\frac{1}{2\hbar^2}y_{i,r}y_{\tau i,r}d_{i,r}^{-1}\\
        &=\frac{1}{2\hbar^2}\frac{2x_{i,r}+\frac{3\hbar}{2}}{2x_{i,r}+3\hbar}\cdot d_{i,r}y_{i,r}y_{\tau i,r}d_{i,r}^{-1}.
    \end{align*}
\end{proof}

\subsection{Serre-type relations for $ y_{i,r} $}
Assume $ c_{i,\tau i}=-1 $. We will need the following two lemmas about Serre-type relations for $ y_{i,r} $'s:

\begin{lemma}\label{lem:yserre1}
    For distinct $ r_1,r_2,s $, we have
    \begin{equation*}
        \mathrm{Sym}_{r_1,r_2}(y_{i,r_1}y_{i,r_2}y_{\tau i,s}-2y_{i,r_1}y_{\tau i,s}y_{i,r_2}+y_{\tau i,s}y_{i,r_1}y_{i,r_2})=0.
    \end{equation*}
\end{lemma}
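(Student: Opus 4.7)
The plan is to verify the identity by direct manipulation in the difference algebra, reducing each of the six monomials appearing in $\mathrm{Sym}_{r_1,r_2}(\cdots)$ to a scalar rational function times a single common reference monomial. The key first observation is that $d_{i,r_1}, d_{i,r_2}, d_{\tau i, s}$ all commute as elements of $X_*(T_V^\tau)$, so every ordered monomial in the expanded symmetrization shares the same $d$-part $D := d_{i,r_1} d_{i,r_2} d_{\tau i, s}$. Hence, after collecting all rational coefficients to the left of $D$, the lemma becomes a scalar identity in $\mathbb{C}(x_{i,r_1}, x_{i,r_2}, x_{i,s})$.

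Taking $T_1 := y_{i,r_1} y_{i,r_2} y_{\tau i,s}$ as the reference, I would rewrite each of the other five monomials as a scalar multiple of $T_1$ by repeatedly applying \cref{lem:yy} to transpose adjacent pairs of $y$'s. Under the standing assumption $c_{i,\tau i} = -1$, the relevant commutation constants are
\[
    C_1 := C_{i,\tau i, r_1, s} = \frac{x_{i,r_1}+x_{i,s}-\hbar/2}{x_{i,r_1}+x_{i,s}+\hbar/2}, \quad C_2 := C_{i,\tau i, r_2, s}, \quad C_{12} := C_{i,i,r_1,r_2} = \frac{x_{i,r_1}-x_{i,r_2}+\hbar}{x_{i,r_1}-x_{i,r_2}-\hbar}.
\]
Crucially, the pairwise distinctness of $r_1, r_2, s$ ensures that each $C_\bullet$ depends only on variables not shifted by the $d$-operators through which it is moved, so no additional shifted factors are picked up. A short check shows that the six monomials in $\mathrm{Sym}_{r_1,r_2}(\cdots)$ contribute, with signs $(+1, -2, +1, +1, -2, +1)$, the coefficients $(1,\,C_2,\,C_1 C_2,\,C_{12},\,C_1 C_{12},\,C_1 C_2 C_{12})$, so the lemma reduces to the rational-function identity
\[
    (1 - 2C_2 + C_1 C_2) + C_{12}(1 - 2C_1 + C_1 C_2) = 0.
\]
Setting $a := x_{i,r_1}$, $b := x_{i,r_2}$, $c := x_{i,s}$, a direct expansion gives $1 - 2C_2 + C_1 C_2 = \hbar(a-b+\hbar)/[(a+c+\hbar/2)(b+c+\hbar/2)]$ and, by interchanging $a$ and $b$, $1 - 2C_1 + C_1 C_2 = \hbar(b-a+\hbar)/[(a+c+\hbar/2)(b+c+\hbar/2)]$. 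Since $\hbar(b-a+\hbar) = -\hbar(a-b-\hbar)$ cancels exactly the denominator of $C_{12}$, the two terms cancel.

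The main obstacle is purely bookkeeping: one must track the order of adjacent transpositions, apply the correct case (and sign) of \cref{lem:yy}, and verify that the shifts induced on the rational coefficients when they are pushed past the $y$'s are trivial --- which is precisely where the disjointness hypothesis $r_1 \neq r_2$, $r_1, r_2 \neq s$ enters essentially. Once this reduction is carried out, what remains is the elementary algebraic identity above.
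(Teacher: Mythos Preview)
Your proof is correct and follows essentially the same approach as the paper: both reduce to the identity $(1 - 2C_2 + C_1 C_2) + C_{12}(1 - 2C_1 + C_1 C_2) = 0$ by repeatedly applying \cref{lem:yy} to express every monomial as a scalar multiple of $y_{i,r_1}y_{i,r_2}y_{\tau i,s}$. Your write-up is in fact slightly more detailed, since you carry out the explicit algebraic verification of that rational-function identity, whereas the paper simply asserts ``One can check that\ldots''.
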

\begin{proof}
    By Lemma \ref{lem:yy}, we get
    \begin{align*}
        &\mspace{24mu}y_{i,r_1}y_{i,r_2}y_{\tau i,s}-2y_{i,r_1}y_{\tau i,s}y_{i,r_2}+y_{\tau i,s}y_{i,r_1}y_{i,r_2}\\
        &=(1-2C_{i,\tau i,r_2,s}+C_{i,\tau i,r_1,s}C_{i,\tau i,r_2,s})y_{i,r_1}y_{i,r_2}y_{\tau i,s}
    \end{align*}
    and
    \begin{align*}
        &\mspace{24mu}y_{i,r_2}y_{i,r_1}y_{\tau i,s}-2y_{i,r_2}y_{\tau i,s}y_{i,r_1}+y_{\tau i,s}y_{i,r_2}y_{i,r_1}\\
        &=(1-2C_{i,\tau i,r_1,s}+C_{i,\tau i,r_1,s}C_{i,\tau i,r_2,s})y_{i,r_2}y_{i,r_1}y_{\tau i,s}\\
        &=(1-2C_{i,\tau i,r_1,s}+C_{i,\tau i,r_1,s}C_{i,\tau i,r_2,s})C_{i,i,r_1,r_2}y_{i,r_1}y_{i,r_2}y_{\tau i,s}.
    \end{align*}
    One can check that
    \begin{equation*}
        (1-2C_{i,\tau i,r_2,s}+C_{i,\tau i,r_1,s}C_{i,\tau i,r_2,s})+(1-2C_{i,\tau i,r_1,s}+C_{i,\tau i,r_1,s}C_{i,\tau i,r_2,s})C_{i,i,r_1,r_2}=0.
    \end{equation*}
\end{proof}

% Mathematica code:
% Simplify[(1 - 2 (r2-s-h/2)/(r2-s+h/2) + (r1-s-h/2)/(r1-s+h/2) (r2-s-h/2)/(r2-s+h/2)) + (1 - 2 (r1-s-h/2)/(r1-s+h/2) + (r1-s-h/2)/(r1-s+h/2) (r2-s-h/2)/(r2-s+h/2)) (r1-r2+h)/(r1-r2-h)]

\begin{lemma}\label{lem:yserre2}
    For distinct $ r,s $, we have
    \begin{equation*}
        y_{i,r}y_{i,r}y_{\tau i,s}-2y_{i,r}y_{\tau i,s}y_{i,r}+y_{\tau i,s}y_{i,r}y_{i,r}=0.
    \end{equation*}
\end{lemma}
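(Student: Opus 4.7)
The plan is to reduce the identity to an elementary rational function computation, following the same strategy as in the proof of Lemma \ref{lem:yserre1}. From the fourth case of Lemma \ref{lem:yy} (with $c_{i,\tau i}=-1$), we have $y_{\tau i,s}y_{i,r}=C\cdot y_{i,r}y_{\tau i,s}$, where
\[
    C := C_{i,\tau i,r,s} = \frac{x_{i,r}+x_{i,s}-\tfrac{\hbar}{2}}{x_{i,r}+x_{i,s}+\tfrac{\hbar}{2}}.
\]
I aim to bring each of the three summands on the left-hand side into the normal form $(\text{rational function})\cdot y_{i,r}^2\, y_{\tau i,s}$ and then verify that the resulting rational coefficients sum to zero.

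The crucial difference from Lemma \ref{lem:yserre1} is that here the two factors $y_{i,r}$ share the same index $r$, so when we commute the leftmost $y_{i,r}$ past the rational function $C$ produced by a swap, the shift operator $d_{i,r}$ contained in $y_{i,r}$ acts nontrivially on $C$. Writing $y_{i,r}=p_{i,r}d_{i,r}$ and using that $d_{i,r}$ shifts $x_{i,r}$ by $\hbar$, we get $y_{i,r}\cdot C = T_{i,r}(C)\cdot y_{i,r}$, where $T_{i,r}$ denotes this shift and
\[
    T_{i,r}(C) = \frac{x_{i,r}+x_{i,s}+\tfrac{\hbar}{2}}{x_{i,r}+x_{i,s}+\tfrac{3\hbar}{2}}.
\]
Applying this in the two reordering steps gives
\[
    y_{i,r}\, y_{\tau i,s}\, y_{i,r} = T_{i,r}(C)\cdot y_{i,r}^2\, y_{\tau i,s}, \qquad y_{\tau i,s}\, y_{i,r}\, y_{i,r} = C\cdot T_{i,r}(C)\cdot y_{i,r}^2\, y_{\tau i,s}.
\]

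Thus the lemma reduces to the scalar identity $1 - 2T_{i,r}(C) + C\cdot T_{i,r}(C)=0$, equivalently $T_{i,r}(C)\cdot(2-C)=1$. A direct expansion gives $2-C = (x_{i,r}+x_{i,s}+\tfrac{3\hbar}{2})/(x_{i,r}+x_{i,s}+\tfrac{\hbar}{2})$, and multiplying by the formula for $T_{i,r}(C)$ above telescopes to $1$. I do not anticipate any real obstacle: the only subtlety compared with Lemma \ref{lem:yserre1} is the need to track the shift $T_{i,r}$ acting on $C$, which was unnecessary there because the corresponding $C$-factors involved only indices distinct from those being shifted by the intervening $y$'s.
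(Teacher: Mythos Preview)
Your proposal is correct and follows essentially the same approach as the paper: both reduce the identity to the scalar computation $1 - 2T_{i,r}(C) + C\cdot T_{i,r}(C) = 0$ by bringing each term into the normal form $(\text{rational function})\cdot y_{i,r}^2 y_{\tau i,s}$, using $y_{\tau i,s}y_{i,r}=C_{i,\tau i,r,s}\,y_{i,r}y_{\tau i,s}$ together with the shift action of $d_{i,r}$ on $C_{i,\tau i,r,s}$.
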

\begin{proof}
    We have
    \begin{align*}
        &\mspace{24mu}y_{i,r}y_{i,r}y_{\tau i,s}-2y_{i,r}y_{\tau i,s}y_{i,r}+y_{\tau i,s}y_{i,r}y_{i,r}\\
        &=\big( 1-2d_{i,r}C_{i,\tau i,r,s}d_{i,r}^{-1}+C_{i,\tau i,r,s}(d_{i,r}^{-1}C_{i,i,r,s}d_{i,r}) \big) y_{i,r}y_{i,r}y_{\tau i,s}\\
        &=\left( 1-2\frac{x_{i,r}+x_{i,s}+\frac{\hbar}{2}}{x_{i,r}+x_{i,s}+\frac{3\hbar}{2}}+\frac{x_{i,r}+x_{i,s}-\frac{\hbar}{2}}{x_{i,r}+x_{i,s}+\frac{\hbar}{2}}\frac{x_{i,r}+x_{i,s}+\frac{\hbar}{2}}{x_{i,r}+x_{i,s}+\frac{3\hbar}{2}} \right) y_{i,r}y_{i,r}y_{\tau i,s}.
    \end{align*}
    One can check that
    \begin{equation*}
        1-2\frac{x_{i,r}+x_{i,s}+\frac{\hbar}{2}}{x_{i,r}+x_{i,s}+\frac{3\hbar}{2}}+\frac{x_{i,r}+x_{i,s}-\frac{\hbar}{2}}{x_{i,r}+x_{i,s}+\frac{\hbar}{2}}\frac{x_{i,r}+x_{i,s}+\frac{\hbar}{2}}{x_{i,r}+x_{i,s}+\frac{3\hbar}{2}}=0.
    \end{equation*}
\end{proof}

% Mathematica code:
% Simplify[1 - 2 (r+s+h/2)/(r+s+3h/2) + (r+s-h/2)/(r+s+h/2) (r+s+h/2)/(r+s+3h/2)]

\section{Relations}\label{sec:relations}
In this section, we start to prove Theorem \ref{thm:gklo} by checking that the operators $B_i(z)$ and $H_i(z)$ satisfy the relations in the shifted twisted Yangian. 

The relation $\left[H_i(u), H_j(v)\right]=0$ is obvious, while $H_{\tau i}(u)=H_i(-u)$ is proved in \cref{lem:Hiz}(1). 

\subsection{Relation \eqref{eq:hbcomm}: commutativity of $h_{i,r}$ and $b_{j,s}$}
In this section, we check the relation \eqref{eq:hbcomm(z)}
\begin{align}\label{equ:hibj}
&
\left(u^2-v^2-\frac{c_{i j} c_{\tau i, j}}{4} \hbar^2\right)\left[h_i(u), b_j(v)\right]-\bigg(\frac{c_{i j}-c_{\tau i, j}}{2} \hbar u +\frac{c_{i j}+c_{\tau i, j}}{2} \hbar v\bigg)\left\{h_i(u), b_j(v)\right\}\\
&\notag
+\hbar\left[h_i(u), b_{j, 1}\right] 
+\hbar v\left[h_i(u), b_{j, 0}\right]+\frac{c_{i j}+c_{\tau i, j}}{2} \hbar^2\left\{h_i(u), b_{j, 0}\right\}=0.
\end{align}

It suffices to prove coefficients of $ H_i(u)y_{j,s} $ are equal to zero for every $ i,j,r $. We can express this in $ D_{i,j,s}(u) $:
\begin{align*}
    &\left( u^2-v^2-\frac{c_{ij}c_{\tau i,j}}{4}\hbar^2 \right) \frac{1-D_{i,j,s}(u)}{-v-x_{j,s}-\frac{\hbar}{2}}-\bigg(\frac{c_{i j}-c_{\tau i, j}}{2} \hbar u +\frac{c_{i j}+c_{\tau i, j}}{2} \hbar v\bigg)\frac{1+D_{i,j,s}(u)}{-v-x_{j,s}-\frac{\hbar}{2}}\\
    &+(x_{j,s}+\tfrac{\hbar}{2})(1-D_{i,j,s}(u))-v(1-D_{i,j,s}(u))-\frac{c_{ij}+c_{i,\tau j}}{2}\hbar(1+D_{i,j,s}(u))=0.
\end{align*}
Substitute the expression of $ D_{i,j,s}(u) $ from (\ref{equ:Dijs}) in it, we get an algebraic identity. This finishes the proof of \eqref{eq:hbcomm}.

% Mathematica code:
% d=(u+x+(1-c)h/2)/(u+x+(1+c)h/2) (u-x-(1-cc)h/2)/(u-x-(1+cc)h/2);
% Simplify[(u^2-v^2-c cc h^2/4) (1-d)/(-v-x-h/2) - (c h(u+v)-cc h(u-v))/2 (1+d)/(-v-x-h/2) + (x+h/2) (1-d) - v (1-d) - (c+cc)h/2 (1+d)]
% Simplify[(u^2-v^2-c cc h^2/4) (1-d)/(-v-x-h/2) - ((c-cc)/2 h u+(c+cc)/2 h v) (1+d)/(-v-x-h/2) + (x+h/2) (1-d) - v (1-d) - (c+cc)h/2 (1+d)]

\subsection{Relation \eqref{eq:bcomm}: commutativity of $b_{i,r}$'s}
In this section, we check the relation \eqref{eq:bcomm(z)}
\begin{align}\label{equ:bbij}
    (u-v)\left[b_i(u), b_j(v)\right]-&\frac{c_{i j}}{2} \hbar\left\{b_i(u), b_j(v)\right\}-\hbar\left(\left[b_{i, 0}, b_j(v)\right]-\left[b_i(u), b_{j, 0}\right]\right) \\\notag&\qquad =-\delta_{\tau i, j} \hbar\left(\frac{2 u}{u+v} h_i^{\circ}(u)+\frac{2 v}{u+v} h_j^{\circ}(v)\right).
\end{align}
Recall that by our convention, $\hbar b_{i,m}$ maps to $B_{i,m}$, the coefficient of $u^{-m-1}$ in $B_i(u)$.

We split the proof into the following cases:
\begin{itemize}
    \item $j\notin\{i,\tau i\}$;
    \item $j=i$;
    \item $j=\tau i$.
\end{itemize}

\subsubsection{The case $j\notin\{i,\tau i\}$}
In this case, $ y_{i,r} $ commutes with $ x_{j,s} $, so the coefficient of $ y_{i,r}y_{j,s} $ in $ B_i(u)B_j(v) $ is simply the product of coefficients of $ y_{i,r} $ and $ y_{j,s} $ in each function. We replace every $ y_{j,s}y_{i,r} $ with $ C_{i,j,r,s}y_{i,r}y_{j,s} $, and then the coefficient of $ y_{i,r}y_{j,s} $ in left hand side of \eqref{equ:bbij} is
\begin{align*}
    &(u-v)\frac{1}{-u-x_{i,r}-\frac{\hbar}{2}}\frac{1}{-v-x_{j,s}-\frac{\hbar}{2}}(1-C_{i,j,r,s})\\
    &-\frac{c_{ij}}{2}\hbar\frac{1}{-u-x_{i,r}-\frac{\hbar}{2}}\frac{1}{-v-x_{i,s}-\frac{\hbar}{2}}(1+C_{i,j,r,s})+\frac{1-C_{i,j,r,s}}{-v-x_{i,s}-\frac{\hbar}{2}}-\frac{1-C_{i,j,r,s}}{-u-x_{i,r}-\frac{\hbar}{2}},
\end{align*}
which simplifies to zero by the definition of $ C_{i,j,r,s} $ in \eqref{equ:Cijrs}.

% Mathematica code:
% c=(xir-xjs+cij h/2)/(xir-xjs-cij h/2);
% Simplify[(u-v)/(-u-xir-h/2)/(-v-xjs-h/2) (1-c) - cij h/2 /(-u-xir-h/2)/(-v-xjs-h/2) (1+c) + (1-c)/(-v-xjs-h/2) - (1-c)/(-u-xir-h/2)]

\subsubsection{The case $j=i$}

For terms with $ r\neq s $, the above argument shows that the coefficient of $ y_{i,r}y_{i,s} $, where $ y_{i,r} $ comes from $ B_i(u) $ and $ y_{i,s} $ comes from $ B_i(v) $, is zero. The other half with $ y_{i,s} $ from $ B_i(u) $ and $ y_{i,r} $ from $ B_i(v) $ is also zero by symmetry.

For terms with $ r=s $, the coefficient of $ y_{i,r}y_{i,r} $ in the left hand side is
\begin{align*}
    &(u-v)\frac{1}{-u-x_{i,r}-\frac{\hbar}{2}}\frac{1}{-v-x_{i,r}-\frac{3\hbar}{2}}-(u-v)\frac{1}{-u-x_{i,r}-\frac{3\hbar}{2}}\frac{1}{-v-x_{i,r}-\frac{\hbar}{2}}\\
    &-\hbar\frac{1}{-u-x_{i,r}-\frac{\hbar}{2}}\frac{1}{-v-x_{i,r}-\frac{3\hbar}{2}}-\hbar\frac{1}{-u-x_{i,r}-\frac{3\hbar}{2}}\frac{1}{-v-x_{i,r}-\frac{\hbar}{2}}\\
    &+\frac{1}{-v-x_{i,r}-\frac{3\hbar}{2}}-\frac{1}{-v-x_{i,r}-\frac{\hbar}{2}}-\frac{1}{-u-x_{i,r}-\frac{\hbar}{2}}+\frac{1}{-u-x_{i,r}-\frac{3\hbar}{2}}.
\end{align*}
This also simplifies to zero.

% Mathematica code:
% Simplify[(u-v)/(-u-x-h/2)/(-v-x-3h/2) - (u-v)/(-u-x-3h/2)/(-v-x-h/2) - h/(-u-x-h/2)/(-v-x-3h/2) - h/(-u-x-3h/2)/(-v-x-h/2) + 1/(-v-x-3h/2) - 1/(-v-x-h/2) - 1/(-u-x-h/2) + 1/(-u-x-3h/2)]

\subsubsection{The case $j=\tau i$}\label{sec:BBitaui}

The terms with $ r\neq s $ cancel out by the same identity as in the case $ j\notin\{i,\tau i\} $. For terms with $ r=s $, there is no commutation between $ y_{i,r} $ and $ y_{\tau i,r} $, so we treat them separately. The coefficient of $ y_{i,r}y_{\tau i,r} $ in left side is
\begin{align*}
    &(u-v)\frac{1}{-u-x_{i,r}-\frac{\hbar}{2}}\frac{1}{-v+x_{i,r}+\frac{\hbar}{2}}\\
    &-\frac{c_{i,\tau i}}{2}\hbar\frac{1}{-u-x_{i,r}-\frac{\hbar}{2}}\frac{1}{-v+x_{i,r}+\frac{\hbar}{2}}+\frac{1}{-v+x_{i,r}+\frac{\hbar}{2}}-\frac{1}{-u-x_{i,r}-\frac{\hbar}{2}}.
\end{align*}

By (\ref{equ:trundivbyz}), the right hand side is
\begin{equation*}
    -\frac{\hbar}{u+v}(2u(H_i(u))^\circ+2v(H_{\tau i}(v))^\circ)=-\frac{1}{u+v}((2u\hbar H_i(u))^\circ-(-2v\hbar H_i(-v))^\circ),
\end{equation*}
and by Corollary \ref{cor:cequal0H}, the coefficient of $ y_{i,r}y_{\tau i,r} $ in it is
\begin{equation*}
    -\frac{1}{u+v}\bigg(\frac{2x_{i,r}+\hbar+\frac{c_{i,\tau i}\hbar}{2}}{u+x_{i,r}+\frac{\hbar}{2}}-\frac{2x_{i,r}+\hbar+\frac{c_{i,\tau i}\hbar}{2}}{-v+x_{i,r}+\frac{\hbar}{2}}\bigg).
\end{equation*}
One can check they are equal. The coefficient of $ y_{\tau i,r}y_{i,r} $ in both sides can be checked similarly, or by exchanging $ i $ and $ \tau i $, $ u $ and $ v $. This finishes the proof of \eqref{eq:bcomm}.

% Mathematica code:
% Simplify[(u-v)/(-u-x-h/2)/(-v+x+h/2) - citi h/2 /(-u-x-h/2)/(-v+x+h/2) + 1/(-v+x+h/2) - 1/(-u-x-h/2) + (2x+h+citi h/2)/(u+v) (1/(u+x+h/2) - 1/(-v+x+h/2))]

\subsection{Relation \eqref{eq:commSerre(z)}}\label{sec:BBijc=0}
In this paragraph, we check the following relation 
\[(u+v)\left[b_i(u), b_j(v)\right]=\delta_{\tau i, j}\hbar\left(h_j^\circ(v)-h_i^\circ(u)\right), \qquad c_{ij}=0.\]
There are two cases depending on whether $j$ equals to $\tau i$ or not.

\subsubsection{The case $j\neq \tau i$} 
Since $c_{ij}=0$, by Lemma \ref{lem:yy}, there is always $ [y_{i,r},y_{j,s}]=0 $, so it is obvious to see that 
\[[B_i(u),B_j(v)]=0.\]

\subsubsection{The case $j=\tau i$.}
There is still $[y_{i,r}, y_{\tau i,s}]=0$ for $ r\neq s $. Therefore,
\begin{align*}
(u+v)[B_i(u),B_{\tau i}(v)] 
    =\,&\bigg(\sum_{r=1}^{v_i} \frac{1}{-v+x_{i,r}-\tfrac{\hbar}{2}}y_{\tau i,r}y_{i,r}-\sum_{r=1}^{v_i}\frac{1}{-v+x_{i,r}+\tfrac{\hbar}{2}}y_{i,r}y_{\tau i,r}\bigg)\\
    &-\bigg(\sum_{r=1}^{v_i} \frac{1}{u+x_{i,r}-\tfrac{\hbar}{2}}y_{\tau i,r}y_{i,r}-\sum_{r=1}^{v_i}\frac{1}{u+x_{i,r}+\tfrac{\hbar}{2}}y_{i,r}y_{\tau i,r}\bigg)\\
    =\,&\hbar\big(H_i^\circ(-v)-H^\circ_i(u)\big)
    =\hbar \big(H^\circ_{\tau i}(v)-H^\circ_i(u)\big).
\end{align*}
Here the last equality follows from Corollary \ref{cor:cequal0H}. This finishes the proof of relation \eqref{eq:commSerre(z)}.

\subsection{Serre Relations}
\label{iSerre(z)}
The Serre Relation \eqref{eq:usualSerre(z)} can be checked exactly the same as in \cite[Appendix B(vi)]{BFN19}, so we omit it.

Now let us check the $\imath$Serre Relation \eqref{eq:iSerre}. In \cite{SSX25}, the authors make use of Bernoulli polynomials and prove the following trick:
\begin{prop}\cite[Proposition 6.3]{SSX25}
\label{prop:finite=>affine}
Suppose that \eqref{eq:hcomm}--\eqref{eq:bcomm} are preserved under \cref{gklo}, then \eqref{eq:iSerre} for arbitrary $k_1,k_2,r\geq 0$ follows from its special case when $k_1=k_2=r=0$. 
\end{prop}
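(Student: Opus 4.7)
\medskip

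The plan is to show that, assuming \eqref{eq:hcomm}--\eqref{eq:bcomm} hold under $\psi$, the family of elements
\[
    S(k_1, k_2, r) := \operatorname{Sym}_{k_1, k_2}\left[b_{i, k_1},\left[b_{i, k_2}, b_{\tau i, r}\right]\right]-\frac{4}{3} \operatorname{Sym}_{k_1, k_2}(-1)^{k_1} \sum_{p=0}^{\infty} 3^{-p}\left[b_{i, k_2+p}, h_{\tau i, k_1+r-p}\right]
\]
vanishes in the image of $\psi$ for every $(k_1, k_2, r)$ as soon as $S(0, 0, 0)$ does. The strategy is to treat \eqref{eq:hbcomm} and \eqref{eq:bcomm} as index-shifting recursions and use them to reduce $S(k_1, k_2, r)$ to $S(0, 0, 0)$ modulo terms that already vanish.

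Concretely, I would first rewrite \eqref{eq:hbcomm} as a recurrence expressing $[h_{i, k+2}, b_{j, s}]$ in terms of $[h_{i, k}, b_{j, s+2}]$ plus explicit anticommutator corrections, and similarly read \eqref{eq:bcomm} as a recurrence trading one unit of index between two $b$'s at the cost of a known $\{b, b\}$-term (and, when $j = \tau i$, an $h$-term). Iterating these two recursions inside each nested commutator of $S(k_1, k_2, r)$, I would push all indices onto the rightmost slot: every $[b_{i, k_\alpha}, \,\cdot\,]$ becomes $[b_{i, 0}, \,\cdot\,]$ plus a telescoping sum of lower $\{h, b\}$- and $\{b, b\}$-terms, and similarly for the $h_{\tau i}$ factor on the right-hand side. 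Organising the result by total degree in the generators, $S(k_1, k_2, r)$ expresses as a $\mathbb{C}(\hbar)$-linear combination of $S(k_1', k_2', r')$ with $k_1' + k_2' + r' \leq k_1 + k_2 + r$, strict inequality except for a single leading $S(0, 0, k_1 + k_2 + r)$ term. A secondary induction on $r$, obtained by applying $\operatorname{ad}(h_{i, 2})$ to $S(0, 0, r - 2)$ and again collecting via Jacobi and the above recursions, then reduces $S(0, 0, r)$ to $S(0, 0, 0)$. Proposition \ref{prop:finite=>affine} follows.

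The main obstacle is the bookkeeping of the infinite tail $\sum_{p \geq 0} 3^{-p}$ on the right of \eqref{eq:iSerre}: under the shift recursions, this weighted series redistributes across $[b_i, h_{\tau i}]$ commutators with rearranged indices, and one must check that, after all cancellations from \eqref{eq:hcomm}--\eqref{eq:bcomm}, the coefficients line up. This is precisely where Bernoulli polynomials intervene, as the generating function $\sum_{p \geq 0} 3^{-p} x^p = 3/(3-x)$ combined with the quadratic $u^2 - v^2$ factors in \eqref{eq:hbcomm(z)} produces Bernoulli-type coefficients in the residue calculus. I would isolate this as a single combinatorial lemma at the level of formal generating series in $u^{-1}, v^{-1}$, prove it by expanding $3/(3-x)$ against the appropriate kernel, and feed the result into the induction, at which point the inductive step becomes essentially mechanical.
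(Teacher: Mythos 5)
The paper does not prove this proposition itself: it is cited verbatim from \cite[Proposition 6.3]{SSX25}, with the only hint about the argument being the remark that the authors of \cite{SSX25} ``make use of Bernoulli polynomials.'' Your proposal correctly identifies the broad strategy (treat \eqref{eq:hbcomm} and \eqref{eq:bcomm} as index-shifting recursions, then reduce all $S(k_1,k_2,r)$ to the base case) and you correctly anticipate that Bernoulli polynomials must enter, but what you have written is a strategic sketch, not a proof, and it has genuine gaps.

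The central unverified claim is that iterating the recursions collapses $S(k_1,k_2,r)$ into a single leading $S(0,0,k_1+k_2+r)$ plus strictly lower-order terms. Relation \eqref{eq:hbcomm} trades two units of $h$-index for two units of $b$-index at the cost of anticommutator corrections weighted by $c_{ij}$ and $c_{\tau i,j}$, while \eqref{eq:bcomm} trades one unit between two $b$'s at the cost of both an anticommutator and (when $j=\tau i$) an $h$-term. After iterating these inside a nested double commutator, the outcome is a genuinely complicated linear combination, and it is far from evident that it ``telescopes'' into the asserted form; nothing in the proposal demonstrates this. Note also that the standard mechanism in such reductions (Levendorskii-type presentations of Yangians) is to start from the base case and raise indices by applying adjoint $h$-actions, not to collapse a high-index cubic expression downward, so even the direction of your induction should be justified rather than assumed.

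More importantly, the entire arithmetic content of the proposition --- matching the coefficients that the recursions produce against the $\frac{4}{3}\operatorname{Sym}_{k_1,k_2}(-1)^{k_1}\sum_{p\geq 0} 3^{-p}$ weighting on the right of \eqref{eq:iSerre} --- is deferred to ``a single combinatorial lemma at the level of formal generating series'' which is neither stated nor proved. You remark that the generating function $3/(3-x)$ interacting with the $u^2-v^2$ kernel in \eqref{eq:hbcomm(z)} should produce Bernoulli-type coefficients; this is a plausible guess, but it is exactly the point the proof has to establish, and asserting that the rest is ``mechanical'' once this lemma holds does not discharge the obligation. As it stands, the proposal identifies where the difficulty lies without resolving it.
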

Hence, it suffices to check Relation \eqref{eq:iSerre} at $k_1=k_2=0$, which admits a generating function 
\begin{equation}\label{equ:iSerreitaui}
	\hbar^2[b_{i,0},[b_{i,0},b_{\tau(i)}(v)]]=\left(4v
	\hbar [b_i(3v), h_{\tau(i)}(v)]\right)^\circ.
\end{equation}
Recall that $\hbar b_{i,0}$ is sent to $B_{i,0}=-\sum_{r=1}^{v_i}y_{i,r}d_{i,r}$.
By direct computation, we get
\begin{align*}
    B_{i,0}^2B_{\tau i}(v)
    &=\sum_{s}\frac{1}{-v+x_{i,s}+\frac{3\hbar}{2}}y_{i,s}y_{i,s}y_{\tau i,s}+\sum_{r\neq s}\frac{1+C_{i,i,r,s}}{-v+x_{i,s}+\frac{\hbar}{2}}y_{i,r}y_{i,s}y_{\tau i,s}\\
    &\mspace{21mu}+\sum_{r\neq s}\frac{1}{-v+x_{i,s}-\frac{\hbar}{2}}y_{i,r}y_{i,r}y_{\tau i,s}+\sum_{r_i\neq s,r_1\neq r_2}\frac{1}{-v+x_{i,s}-\frac{\hbar}{2}}y_{i,r_1}y_{i,r_2}y_{\tau i,s},
\end{align*}
\begin{align*}
    B_{i,0}B_{\tau i}(v)B_{i,0}
    &=\sum_{s}\frac{1}{-v+x_{i,s}+\frac{\hbar}{2}}y_{i,s}y_{\tau i,s}y_{i,s}\\
    &\mspace{21mu}+\sum_{r\neq s}\frac{1}{-v+x_{i,s}-\frac{\hbar}{2}}y_{i,r}y_{\tau i,s}y_{i,s}+\sum_{r\neq s}\frac{1}{-v+x_{i,s}+\frac{\hbar}{2}}y_{i,s}y_{\tau i,s}y_{i,r}\\
    &\mspace{21mu}+\sum_{r\neq s}\frac{1}{-v+x_{i,s}-\frac{\hbar}{2}}y_{i,r}y_{\tau i,s}y_{i,r}+\sum_{r_i\neq s,r_1\neq r_2}\frac{1}{-v+x_{i,s}-\frac{\hbar}{2}}y_{i,r_1}y_{\tau i,s}y_{i,r_2},
\end{align*}
and
\begin{align*}
    B_{\tau i}(v)B_{i,0}^2
    &=\sum_{r_1,r_2,s=1}^{v_i}\frac{1}{-v+x_{i,s}-\frac{\hbar}{2}}y_{\tau i,s}y_{i,r_1}y_{i,r_2}\\
    &=\sum_{s}\frac{1}{-v+x_{i,s}-\frac{\hbar}{2}}y_{\tau i,s}y_{i,s}y_{i,s}+\sum_{r\neq s}\frac{1+d_{i,s}^{-1}C_{i,i,s,r}d_{i,s}}{-v+x_{i,s}-\frac{\hbar}{2}}y_{\tau i,s}y_{i,s}y_{i,r}\\
    &\mspace{21mu}+\sum_{r\neq s}\frac{1}{-v+x_{i,s}-\frac{\hbar}{2}}y_{\tau i,s}y_{i,r}y_{i,r}+\sum_{r_i\neq s,r_1\neq r_2}\frac{1}{-v+x_{i,s}-\frac{\hbar}{2}}y_{\tau i,s}y_{i,r_1}y_{i,r_2}.
\end{align*}

Notice that 
\[[b_{i,0},[b_{i,0},b_{\tau(i)}(v)]]=b_{i,0}b_{i,0}b_{\tau(i)}(v)-2b_{i,0}b_{\tau(i)}(v)b_{i,0}+b_{\tau(i)}(v)b_{i,0}b_{i,0}.\]
Therefore, by Lemma \ref{lem:yserre1} and \ref{lem:yserre2}, the terms with distinct $ r_1,r_2,s $ and those with $ r_1=r_2\neq s $ have no contribution.

From
\begin{align*}
    &\mspace{24mu}4v\hbar[B_i(3v),H_i(-v)]\\
    &=\sum_{r=1}^{v_i}\frac{4v\hbar}{-3v-x_{i,r}-\frac{\hbar}{2}}[y_{i,r},H_i(-v)]\\
    &=\sum_{r=1}^{v_i}\frac{4v\hbar}{-3v-x_{i,r}-\frac{\hbar}{2}}(D_{i,i,r}(-v)-1)H_i(-v)y_{i,r}\\
    &=-4v\hbar^2H_i(-v)\sum_{r=1}^{v_i}\frac{1}{(v+x_{i,r})(v-x_{i,r}-\frac{3\hbar}{2})}y_{i,r},
\end{align*}
we know the function $4v[B_i(3v),H_i(-v)]$ has simple poles at \[\bigg\{x_{i,r}+\tfrac{3\hbar}{2}, x_{i,r}-\tfrac{\hbar}{2}, x_{i,r}+\tfrac{\hbar}{2} \mid 1\leq r\leq v_i \bigg\}.\]
One can calculate the residues at these poles and get the following expression
\begin{align*}
    &\mspace{24mu}\mathrm{Res}_{v=x_{i,s}-\frac{\hbar}{2}}(4v\hbar[B_i(3v),H_i(-v)])\\
    &=-y_{\tau i,s}y_{i,s}y_{i,s}+\sum_{r\neq s}\frac{2\hbar(2x_{i,s}-\tfrac{\hbar}{2})}{(x_{i,s}+x_{i,r}-\frac{\hbar}{2})(x_{i,s}-x_{i,r}-2\hbar)}y_{\tau i,s}y_{i,s}y_{i,r},
\end{align*}
\begin{align*}
    &\mspace{24mu}\mathrm{Res}_{v=x_{i,s}+\frac{\hbar}{2}}(4v\hbar[B_i(3v),H_i(-v)])\\
    &=2y_{i,s}y_{\tau i,s}y_{i,s}-\sum_{r\neq s}\frac{2\hbar(2x_{i,s}+\tfrac{\hbar}{2})}{(x_{i,s}+x_{i,r}+\frac{\hbar}{2})(x_{i,s}-x_{i,r}-\hbar)}y_{i,s}y_{\tau i,s}y_{i,r},
\end{align*}
\begin{equation*}
    \mathrm{Res}_{v=x_{i,s}+\frac{3\hbar}{2}}(4v\hbar[B_i(3v),H_i(-v)])=-y_{i,s}y_{i,s}y_{\tau i,s},
\end{equation*}
where the last one uses Lemma \ref{lem:Hiat3h/2}. Applying Lemma \ref{lem:partialfrac} to $4v[B_i(3v),H_i(-v)]$, we compare the coefficients in two sides.

The coefficients of $ r_1=r_2=s $ in two sides are the same:
\begin{align*}
    &\mspace{24mu}\frac{1}{-v+x_{i,s}+\frac{3\hbar}{2}}y_{i,s}y_{i,s}y_{\tau i,s}-\frac{2}{-v+x_{i,s}+\frac{\hbar}{2}}y_{i,s}y_{\tau i,s}y_{i,s}+\frac{1}{-v+x_{i,s}-\frac{\hbar}{2}}y_{\tau i,s}y_{i,s}y_{i,s}\\
    &=\frac{-1}{v-x_{i,s}+\frac{\hbar}{2}}y_{\tau i,s}y_{i,s}y_{i,s}+\frac{2}{v-x_{i,s}-\frac{\hbar}{2}}y_{i,s}y_{\tau i,s}y_{i,s}+\frac{-1}{v-x_{i,s}-\frac{3\hbar}{2}}y_{i,s}y_{i,s}y_{\tau i,s}.
\end{align*}
And the coefficients of $ r_1\neq r_2=s $ are also the same: for the coefficients of $ y_{i,r}y_{i,s}y_{\tau i,s} $,
\begin{align*}
    \frac{1+C_{i,i,r,s}-2C_{i,i,r,s}d_{i,s}C_{i,\tau i,r,s}d_{i,s}^{-1}}{-v+x_{i,s}+\frac{\hbar}{2}}=\frac{1}{-v+x_{i,s}+\frac{\hbar}{2}}\frac{2\hbar(2x_{i,s}+\frac{\hbar}{2})C_{i,i,r,s}d_{i,s}C_{i,\tau i,r,s}d_{i,s}^{-1}}{(x_{i,s}+x_{i,r}+\frac{\hbar}{2})(x_{i,s}-x_{i,r}-\hbar)},
\end{align*}
which is an identity, and for the coefficients of $ y_{\tau i,s}y_{i,s}y_{i,r} $,
\begin{equation*}
    \frac{1+d_{i,s}^{-1}C_{i,i,s,r}d_{i,s}-2C_{\tau i,i,s,r}d_{i,s}^{-1}C_{i,i,s,r}d_{i,s}}{-v+x_{i,s}-\frac{\hbar}{2}}=\frac{1}{v-x_{i,s}+\frac{\hbar}{2}}\frac{2\hbar(2x_{i,s}-\frac{\hbar}{2})}{(x_{i,s}+x_{i,r}-\frac{\hbar}{2})(x_{i,s}-x_{i,r}-2\hbar)},
\end{equation*}
which is also an identity. This finishes the proof of \eqref{equ:iSerreitaui}.

% Mathematica code:
% c = (r-s+h)/(r-s-h);
% ct = (r+s-h/2)/(r+s+h/2);
% Simplify[1 + c - 2c(ct/.{s->s+h}) - (2h(2s+h/2)c(ct/.{s->s+h}))/((r+s+h/2)(s-r-h))]
% 
% c = (s-r+h)/(s-r-h);
% ct = (-s-r-h/2)/(-s-r+h/2);
% Simplify[1 + (c/.{s->s-h}) - 2ct(c/.{s->s-h}) + (2h(2s-h/2))/((s+r-h/2)(s-r-2h))]

\bibliographystyle{alpha}
\bibliography{TruSTY.bib}

\end{document}